\newtheorem{theo}{Theorem}[section]
\newtheorem{lemma}[theo]{Lemma}
\newtheorem{conjec}[theo]{Conjecture}
\theoremstyle{definition}
\newcommand{\Z}{\mathbb{Z}}
\newcommand{\B}{\mathcal{B}}
\newcommand{\co}{\mathcal{O}}
\theoremstyle{remark}
\begin{document}

\title{Almost Invariant Sets }
\author{M.J. Dunwoody  }

% LaTex only know AMS SubjClass2000
\subjclass[2010]{20F65 ( 20E08)}
\keywords{Structure trees, tree decompositions, group splittings}

\begin{abstract}
A short proof of a conjecture of Kropholler is given.   This gives a relative version of Stallings' Theorem
on the structure of groups with more than one end.   A generalisation of the Almost Stability Theorem is also obtained, that gives information
about the structure of the Sageev cubing.
\end{abstract}
\maketitle

\begin{section} {Introduction}
Let $G$ be a group.    A  subset $A$ of $G$ is said to be  {\it almost invariant} if the symmetric difference $A + Ag$ is finite for every
$g \in G$.   In addition $A$ is said to be {\it proper} if both $A$ and $A^* = G -A$ are infinite.   The group $G$ is said to have more than one end
if it has a proper almost invariant subset.

\begin{theo}\label{SC} 
A group $G$ contains a proper almost invariant subset (i.e. it has more than one end) if and only if it has a non-trivial action
on a tree with finite edge stabilizers.
\end{theo}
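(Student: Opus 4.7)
The ``if'' direction is straightforward. Given a non-trivial action of $G$ on a tree $T$ with finite edge stabilizers, I would fix a base vertex $v_0$ and an edge $e$. Removing $e$ splits $T$ into two half-trees $T_+, T_-$, and I would set $A = \{g \in G : g v_0 \in T_+\}$. An element $g$ belongs to $A + Ah$ precisely when the geodesic from $g v_0$ to $g h v_0$ crosses $e$, equivalently when $g^{-1} e$ lies on the finite segment $[v_0, h v_0]$. Such $g$ form finitely many cosets of the finite group $G_e$, so $A + Ah$ is finite; and non-triviality of the action (no global fixed vertex) prevents either of $A, A^*$ from being finite.

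For the converse I would promote the given proper almost invariant set $A$ to a $G$-invariant \emph{nested} family $\mathcal{E}$ of proper almost invariant subsets. Here ``nested'' means that any two members $B, C \in \mathcal{E}$ satisfy one of the four inclusions among $\{B, B^*, C, C^*\}$. Once such an $\mathcal{E}$ is in hand, the classical Dunwoody structure-tree construction produces a $G$-tree whose edges are the complementary pairs $\{B, B^*\}$ in $\mathcal{E}$ and whose vertices are the equivalence classes induced by the partial order $\leq$ coming from the nesting.

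The main obstacle will be the construction of $\mathcal{E}$. My plan is to assign to each almost invariant set $B$ a numerical ``boundary size'' $\delta(B)$ --- essentially the size of $B + Bs$ summed over a fixed finite generating set $S$ --- and to distinguish those sets that are minimal for $\delta$ within a suitable Boolean closure of the $G$-translates of $A$. A symmetric-difference/crossing-exchange argument, comparing $B \cap C$, $B \cap C^*$, $B^* \cap C$, $B^* \cap C^*$ for two minimizers, should show that two non-nested minimizers would violate minimality, so minimizers automatically form a nested family. A Zorn or direct maximality argument then yields a maximal $G$-invariant nested subfamily $\mathcal{E}$ containing something equivalent to $A$. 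This is the technical heart of the proof.

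Finally, the stabilizer of an edge $\{B, B^*\}$ of the resulting tree is the setwise stabilizer of the pair, i.e.\ the set of $g$ with $B + Bg$ empty or (if we allow swapping) with $B + B^* g$ empty. Because $B$ is proper and the symmetric differences $B + Bg$ are finite in general, an elementary counting argument bounds this stabilizer: its action on $B$ has almost all orbits contained in $B$ and almost all in $B^*$, which forces the stabilizer to be finite. Non-triviality of the action follows because $\mathcal{E}$ contains a proper set, so no vertex of the tree is $G$-fixed, completing the proof.
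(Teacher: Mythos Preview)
Your ``if'' argument coincides with the paper's: the paper defines $A = G[e,v] = \{g \in G : e \rightarrow gv\}$, which is exactly your half-tree set, and shows $A + Ax$ is $G_e$-finite via the same geodesic-crossing count (an element $g$ lies in $A \setminus Ax^{-1}$ iff $g^{-1}e$ lies on the finite path $[xv,v]$). The paper does \emph{not} supply a proof of the converse; it simply cites Stallings~\cite{[St]} for finitely generated $G$ and Dicks--Dunwoody~\cite{[DD]} in general, and then devotes the remainder of the paper to the relative (Kropholler) version via the structure-tree machinery of Section~2.

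Your sketch of the converse is in the spirit of the cited Dunwoody argument, and is essentially what the paper later implements in the relative setting (a canonical nested generating family $\mathcal{E}_n$ for the Boolean ring of cuts, giving a tree). There are, however, two genuine gaps in your outline. First, your boundary invariant $\delta(B)$ is built from a \emph{fixed finite generating set} $S$, so your argument only addresses the finitely generated case and not the theorem as stated; passing to arbitrary $G$ (as in \cite{[DD]}) is a nontrivial further step, typically handled by the Almost Stability Theorem or by a limit over finitely generated subgroups, and your Zorn remark does not substitute for this. Second, you identify the edge stabilizer of $\{B,B^*\}$ with ``the set of $g$ with $B + Bg$ empty'', which confuses the right action (under which $B$ is almost invariant) with the left action (under which $G$ permutes the family $\mathcal{E}$ and hence acts on the tree). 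The relevant stabilizer is $\{g \in G : gB = B\}$; it is indeed finite, but the clean reason is that any such $g$ also satisfies $g(B + Bx) = B + Bx$ for every $x$, and since $B$ is proper some $B + Bx$ is a nonempty finite set on which left multiplication is free. Your ``almost all orbits in $B$ and almost all in $B^*$'' picture does not literally deliver this conclusion.
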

This result was proved by Stallings \cite {[St]} for finitely generated groups and was generalized to all groups by Dicks and Dunwoody \cite {[DD]}.
The action of a group $G$ on a tree is {\it trivial} if there is a vertex that is fixed by all of $G$.  Every group has a trivial action
on a tree.

Let $T$ be a tree with directed edge set $ET$.      If $e$ is a directed edge, then let $\bar e$ denote $e$ with the reverse orientation.
If $e, f$ are distinct directed edges then  write $e >f$  if the smallest subtree of $T$ containing $e$ and $f$ is as below.

\begin{figure}[htbp]
\centering
\begin{tikzpicture}[scale=.5]
\draw (-.5, 2)-- (1,2)--(2,1)--(3,2)--(4,1)--(5,2)--(6,1)--(7,2)--(8,1)--(9,2)--(10.5,2)  ;
\draw (.5,2) node {$>$} ;
\draw (9.5,2) node {$>$} ;
\draw (.5,2.1) node [above] {$e$} ;
\draw (9.5,2.1) node [above] {$f$} ;
\end{tikzpicture}
\end{figure}

Suppose the group $G$ acts on $T$.   We say that $g$ {\it shifts } $e$ if either $e >ge$ or $ge > e$.
If for some $e \in ET$ and some $g \in G$,  $g$ shifts $e$,   then $G$ acts non-trivially on a tree $T_e$ obtained by contracting
all edges of $T$ not in the orbit of $e$ or $\bar e$.   In this action there is just one orbit of edge pairs.
Bass-Serre theory tells us that either $G = G_u*_{G_e} G_v$ where $u, v$ are the vertices of $e$ and they are in different orbits  in the contracted tree $T_e$, or $G$ is the 
HNN-group $G = G_u *_{G_e}$ if $u,v$ are in the same $G$-orbit.   
If either case occurs we say that $G$ splits over $G_e$.

If there is no edge $e$ that is shifted by any $g \in G$, (and $G$ acts without involutions, i.e. there is no $g \in G$ such that $ge = \bar e$)
then $G$ must fix a vertex or an end of $T$.   If the action is non-trivial, it fixes an end of $T$, i.e.
$G$ is a union of an ascending sequence of vertex stabilizers,  $G = \bigcup  G_{v_n}$, where $v_1, v_2, \dots   $ is a sequence of 
adjacent vertices and $G_{v_1} \leq G_{v_2}\leq \dots $ and $G \not= G_{v_n} $ for any $n$.

Thus Theorem ~\ref{SC} could be restated as 

\begin{theo} [\cite {[St]}, \cite{[DD]}] 
A group $G$ contains a proper almost invariant subset (i.e. it has more than one end) if and only if it splits over a finite subgroup or
it is countably infinite and locally finite.
\end{theo}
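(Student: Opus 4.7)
The plan is to derive the restated theorem from Theorem~\ref{SC} together with the Bass--Serre analysis already recorded in the preceding discussion: every non-trivial tree action with finite edge stabilizers falls either into the shifting case (yielding a splitting over a finite subgroup) or into the end-fixing case (forcing $G$ to be locally finite), and conversely each of the two structural hypotheses supplies a non-trivial tree action with finite edge stabilizers.

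For the forward direction, assume $G$ has more than one end, so Theorem~\ref{SC} produces a non-trivial action of $G$ on a tree $T$ with finite edge stabilizers. Following the dichotomy already set up, either some $g\in G$ shifts some edge $e$, or, assuming no involutions, $G$ fixes an end of $T$ and is a union of an ascending sequence of vertex stabilizers along a ray $v_1,v_2,\ldots$ toward that end. In the shifting case, Bass--Serre theory applied to the contracted tree $T_e$ exhibits $G$ as $G_u *_{G_e} G_v$ or as an HNN-extension $G_u *_{G_e}$, so $G$ splits over the finite subgroup $G_e$. Inversions ($ge=\bar e$) are handled by passing to the barycentric subdivision, which is still a $G$-tree with finite edge stabilizers and no involutions, or by observing directly that an inversion yields a splitting of $G$ over $G_e$. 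In the end-fixing case, the key observation is that each $h\in G_{v_n}$ fixes both $v_n$ and the fixed end, hence fixes the entire ray from $v_n$ to that end; in particular $h$ stabilizes the edge $e_n$ from $v_n$ to $v_{n+1}$, giving $G_{v_n}\leq G_{e_n}$, which is finite. Therefore $G=\bigcup_n G_{v_n}$ is a strictly ascending union of finite subgroups, so $G$ is countably infinite and locally finite.

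For the converse, I exhibit a non-trivial tree action with finite edge stabilizers in each case and invoke Theorem~\ref{SC}. If $G$ splits over a finite subgroup $H$, the Bass--Serre tree of the splitting has edge stabilizers conjugate to $H$ and no global fixed vertex, so it is the desired action. If $G$ is countably infinite and locally finite, write $G=\bigcup_n G_n$ as a strictly ascending chain of finite subgroups (enumerate $G$, let $G_n$ be the finite subgroup generated by the first $n$ elements, and pass to a subsequence to ensure strict inclusions) and form the graph $T$ with vertex set $\bigsqcup_n G/G_n$ and an edge joining $gG_n$ to $gG_{n+1}$ for each $n\geq 1$ and $g\in G$. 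Each vertex has a unique upward neighbor, and two upward rays from cosets of $G_m$ and $G_n$ merge as soon as their representatives land in a common $G_N$, which happens because the $G_n$ exhaust $G$; thus $T$ is connected and acyclic. The $G$-action on $T$ has vertex (hence edge) stabilizers conjugate to the finite subgroups $G_n$, and no vertex is globally fixed because the chain is strict.

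I expect the main substantive step to be the end-fixing case of the forward direction: the observation that each vertex stabilizer along the fixed ray is forced into a finite edge stabilizer, and is therefore itself finite, is precisely what promotes the weak hypothesis \emph{finite edge stabilizers} to the strong conclusion \emph{$G$ is locally finite}. Inversions are a routine but necessary technicality, addressed as above; everything else is a direct translation between tree actions and splittings.
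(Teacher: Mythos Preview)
Your argument is correct and follows the same route as the paper: both derive the restated theorem from Theorem~\ref{SC} via the Bass--Serre dichotomy already laid out (some $g$ shifts an edge $\Rightarrow$ splitting over $G_e$; no shift and no inversion $\Rightarrow$ $G$ fixes an end and is the ascending union of vertex stabilisers along a ray).

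You are more explicit than the paper on two points it leaves to the reader. First, in the end-fixing case you spell out why the vertex stabilisers are finite: $G_{v_n}$ fixes $v_n$ and the end, hence the unique ray between them, hence $e_n$, so $G_{v_n}\le G_{e_n}$. Second, for the converse you actually build the tree in the locally finite case (the coset tree on $\bigsqcup_n G/G_n$), whereas the paper simply asserts that the ``if'' direction is easy. On the other hand, the paper does something you do not: rather than merely invoking Theorem~\ref{SC} for the converse, it constructs the almost invariant set directly as $G[e,v]=\{g\in G : e\rightarrow gv\}$ and checks by hand that $A+Ax$ is $G_e$-finite, which is the ``stronger version of the if part'' announced immediately after the statement and is reused later in the relative setting.
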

The if part of the theorem is fairly easy to prove.  We now prove  a stronger version of the if part, following \cite {[D]}.

Let $H$ be a subgroup of $G$.   A subset $A$ is  $H$-{\it finite} if $A$ is contained in finitely many right $H$-cosets, i.e. for some finite
set $F$,   $A \subseteq HF$. 
A subgroup $K$ is $H$-finite if and only if $H\cap K$ has finite index in $K$.
   Let $T$ be a $G$-tree and suppose there is an edge $e$ and  vertex $v$.

We say that $e$ {\it  points at} $v$ if there is a subtree of $T$ as below.
We write $e \rightarrow v$.

\begin{figure}[htbp]
\centering
\begin{tikzpicture}[scale=.5]
\draw (-.5, 2)-- (1,2)--(2,1)--(3,2)--(4,1)--(5,2)--(6,1)--(7,2)--(8,1)--(9,2)--(10.5,2)  ;
\draw (.5,2) node {$>$} ;
%\draw (9.5,2) node {$>$} ;
\draw (.5,2.1) node [above] {$e$} ;
\draw (10.5,2.1) node [above] {$v$} ;
\draw (10.5,2) node {$\bullet$} ;

\end{tikzpicture}
\end{figure}

Let $G[e, v] = \{g \in G | e \rightarrow gv\}$.

If $h \in G$, then $G[e, v]h = G[e,h^{-1}v]$,  since if $e\rightarrow gv,  e \rightarrow gh (h^{-1}v)$.

It follows from this that If $K = G_v$, then $G[e,v]K =G[e,v]$.
Also if $H = G_e$, then $HG[e,v] = G[e,v]$.

If $v = \iota e $, then $G_e = H \leq K = G_v$ and if $A = G[e, \iota e]$, then $A= HAK$.

\begin{figure}[htbp]
\centering
\begin{tikzpicture}[scale=.5]
\draw (-2, 2)-- (2,2) ;
\draw (.0,2) node {$>$} ;
\draw (2,2) node {$\bullet$} ;
\draw (-2,2) node {$\bullet$} ;

\draw (.5,2.1) node [above] {$e$} ;
\draw (-2,2.1) node [above] {$v$} ;
\end{tikzpicture}
\end{figure}

Consider the set $Ax,  x\in G$.  If $g \in A, gx \notin A$ , then $e \rightarrow gv,  \bar e \rightarrow gx v$.
This means that $e$ is on the directed  path joining $gxv$ and $gv$.  This happens if and only
if $g^{-1}e$ is on the path joining $xv$ and $v$.  There are  only finitely many directed  edges in the $G$-orbit
of $e$ in this path.  Hence $g^{-1} \in FH$, where $F$ is finite, and $H = G_e$, and $g \in HF^{-1}$.
Thus $A -Ax^{-1} = HF^{-1}$, i.e. $A- Ax^{-1}$ is $H$-finite.  It follows that both $Ax - A$ and $A-Ax$ are $H$-finite
and so $A + Ax$ is $H$-finite for every $x \in G$,  i.e. $A$ is an  $H$-{\it almost invariant set}.

  If the action on $T$ is non-trivial, then neither $A$ nor $A^*$ is $H$-finite.   We say that $A$ is {\it proper}.
  
  Peter Kropholler has conjectured that the following generalization of Theorem ~\ref{SC} is true for finitely generated groups.
    \begin{conjec}\label{KC}   Let $G$ be a  group and let $H$ be a subgroup.   If there is a proper $H$-almost invariant subset $A$
  such that $A = AH$, then $G$ has a non-trivial action on a tree in which $H$ fixes a vertex $v$ and  every edge  incident with $v$ has an $H$-finite stabilizer.
  
  \end{conjec}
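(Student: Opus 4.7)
I will imitate the proof of Theorem~\ref{SC} in the relative setting, working throughout with $H$-finiteness in place of finiteness. Starting from the proper $H$-almost invariant subset $A$ satisfying $A = AH$, form the $G$-invariant family $\mathcal{E} = \{gA, gA^* : g \in G\}$. The hypothesis $A = AH$ is inherited by every element: $BH = B$ for each $B \in \mathcal{E}$.

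The key technical step is to extract from $\mathcal{E}$ a $G$-invariant subfamily $\mathcal{F}$ that is \emph{nested}, i.e., for any $B, C \in \mathcal{F}$, at least one of $B \cap C$, $B \cap C^*$, $B^* \cap C$, $B^* \cap C^*$ is $H$-finite. This is the relative analogue of Dunwoody's classical nested-cuts construction, carried out via the usual minimization on the ``crossing number'' of pairs, but transcribed with $H$-finite replacing finite. From $\mathcal{F}$ one then builds the associated Bass-Serre tree $T$ in the standard way: vertices are the coherent orientations of $\mathcal{F}$ (ultrafilters), and edges are the unordered pairs $\{B, B^*\}$ with $B \in \mathcal{F}$. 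The group $G$ acts on $T$ from the left.

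The required properties of $T$ are then checked as follows. Let $v_0$ be the vertex corresponding to the ultrafilter $\{B \in \mathcal{F} : 1 \in B\}$. For $h \in H$ and $B \in \mathcal{F}$, the right $H$-invariance $BH = B$ yields $1 \in B \iff h \in B$, so $h \cdot v_0 = v_0$ and $H$ fixes $v_0$. The action is non-trivial because $A$ is proper: any $g_0 \in A^*$ is separated from $1$ by $A$ itself, and hence by some element of $\mathcal{F}$ after the nesting procedure, giving a vertex different from $v_0$. Finally, for an edge $e$ of $T$ incident to $v_0$, corresponding to a cut $B \in \mathcal{F}$ that minimally separates $v_0$ from an adjacent vertex, the setwise stabilizer $\{x \in G : xB = B\}$ must be shown to be $H$-finite. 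This is extracted from the $H$-almost invariance of $B$ together with the nestedness of $\mathcal{F}$: if $x$ preserves $B$, then $x$ acts on the ``thin boundary'' of $B$ encoded by the $H$-finite symmetric differences $B + Bx$, and the combinatorial constraints force $x$ to lie in a finite union of right $H$-cosets.

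\noindent\textbf{Main obstacle.} I expect the most delicate point to be the $H$-finiteness of the edge stabilizers at $v_0$. In the classical Stallings setting this is almost tautological from finiteness of symmetric differences, but in the $H$-relative setting one must carefully relate the setwise stabilizer of a nested cut $B$ to the $H$-almost invariance, using both the hypothesis $A = AH$ and the fact that only edges at the specific vertex $v_0$ are in question. A secondary difficulty is the nested-cuts construction itself, where one has to verify that the classical crossing-minimization argument still yields a non-trivial $G$-invariant nested family once the ambient ideal of negligible sets is enlarged from finite to $H$-finite.
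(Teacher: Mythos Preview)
Your proposal attempts the full conjecture, but the paper establishes it only under the extra hypothesis that $G$ is finitely generated over $H$; this is not an incidental assumption but is used essentially. The finite-generation hypothesis allows one to build a connected $G$-graph $X$ with a vertex $o$ having stabiliser $H$ and with $G\backslash X$ finite, so that the given $H$-almost invariant set $A$ becomes a set of vertices with $H$-finite coboundary $\delta A$. The paper then applies structure-tree theory to the quotient $H\backslash X$ to obtain a canonical nested generating set $\ce_n$, lifts it to an $H$-nested set $\bar\ce_n$ in $X$, and only \emph{then} proves that the $G$-translates $G\bar\ce_n$ form a nested system. Your outline bypasses all of this and asserts that one can directly run ``the usual minimization on the crossing number'' with $H$-finite in place of finite. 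That is precisely where the difficulty lies, and it does not go through as stated.

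The obstruction is visible already in Kropholler's corner lemma (Lemma~\ref{Krop} of the paper): when $A$ and $gA$ cross, the distinguished corner $A\cap gA$ is not $H$-almost invariant but $(H\cap gHg^{-1})$-almost invariant. So replacing a crossing pair by a corner takes you out of the class of $H$-almost invariant sets and into almost invariant sets over varying, typically smaller, subgroups. There is therefore no single well-ordered invariant (analogous to $|\delta A|$ in the classical case) on which to run a minimisation within the family $\{gA,gA^*\}$. The paper circumvents this by first producing the canonical nested set $\bar\ce_n$ from the structure tree of $H\backslash X$, and then using the corner lemma together with a careful corner-count (the $a,b,c,d,e,f$ analysis around Figure~\ref{Cuts}) to show that distinct $G$-translates of $\bar\ce_n$ are already nested; no minimisation over crossings is performed.

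Your edge-stabiliser argument is also not correct as written. From $xB=B$ one gets nothing about $B+Bx$ (left multiplication versus right almost invariance), and there is no general reason for the left stabiliser of an $H$-almost invariant set to be $H$-finite. In the paper the $H$-finiteness of $G_e$ for edges incident with the fixed vertex is deduced structurally: such an edge $e$ lies in $\bar T(H)$, and one shows $\bar T(H)\cap\bar T(gHg^{-1})=\bar T(H\cap gHg^{-1})$, so if $g\in G_e$ then $e\in\bar T(gHg^{-1})$ and $G_e$ contains $H\cap gHg^{-1}$ with finite index via the coboundary count in $X$. None of this is available without the graph $X$, hence without the finite-generation hypothesis.
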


We have seen that  the conjecture   is true if $H$  has one element.    The conjecture has been
proved  for $H$  and $G$ satisfying extra conditions by 
Kropholler \cite {[K90]},  Dunwoody and Roller \cite {[DR]} ,  Niblo \cite {[N]} and  Kar and Niblo \cite {[NK]}.

 If $G$ is the triangle
group $G = \langle a, b | a^2 = b^3 = (ab)^7 =1\rangle$, then $G$ has an infinite cyclic subgroup $H$ for which there 
is a proper $H$-almost invariant set.   Note that in this case $G$ has no non-trivial action on a tree, so the condition $A = AH$ is
necessary in  Conjecture~\ref{KC}.

% in the case when $G$ is finitely generated.  It is hoped to give a proof of the full conjecture in a later paper.

%A discussion of the Kropholler Conjecture is given in \cite{NS} .

\begin{center}
    \includegraphics[width=10cm]{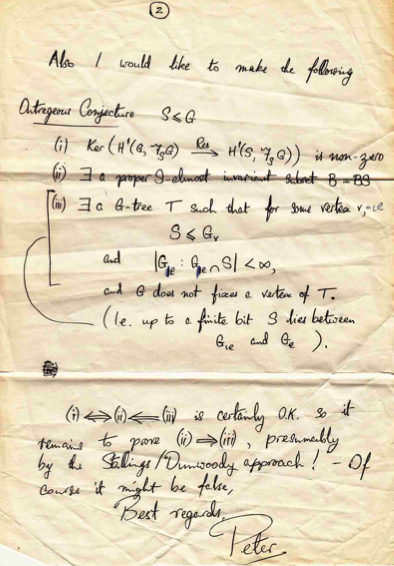}
\end{center}

% in the case when $G$ is finitely generated.  It is hoped to give a proof of the full conjecture in a later paper.

A discussion of the Kropholler Conjecture is given in \cite{NS}.  I first learned of this conjecture in a letter Peter wrote to me in January 1988,  a page of which is shown here.

We give a proof of the conjecture when $G$ is finitely generated over $H$, i.e. it is generated by $H$ together with a finite subset.

I am very grateful to Peter Kropholler for enjoyable discussions and a very helpful email correpondence.

\end{section}

\newcommand{\F}{\mathcal{F}}

\newcommand{\ce}{\mathcal{E}}
\newcommand{\cR}{\mathcal{R}}

\newcommand{\bG}{\bar G}
\newcommand{\bH}{\bar H}
\renewcommand{\d}{{\rm d}}

\newcounter{fig}
\setcounter{fig}{0}
%\DeclareMathOperator{\Cay}{Cay}

%In tis paper it is shown how the theory of finite networks can be generalised to networks based on an arbitrary graphs.

\section {Infinite Networks}
Let $X$ be an arbitrary connected simple graph.  It is not even assumed that $X$ is locally finite.
Let $\B X$ be the set of all edge cuts in $X$.   Thus if $A \subset VX$, then $A \in \B X$ if $\delta A$ is finite.
Here $\delta A$ is the set of edges  which have one vertex in $A$ and one in $A^*$.  

A ray $R$ in $X$ is an infinite sequence $x_1, x_2, \dots $ of distinct vertices such that $x_i, x_{i+1}$ are adjacent for every $i$.
If $A$ is an edge cut, and $R$ is a ray, then there exists an integer $N$ such that for $n > N$ either $x_n \in A$ or $x_n \in A^*$.
We say that $A$ separates rays $R = (x_n), R' = (x_n')$ if for $n$ large enough either $x_n \in A, x_n' \in A^*$ or $x_n \in A^*, x_n' \in A$.
We define $R\sim R'$ if they are not separated by any edge cut.     It is easy to show that $\sim $ is an equivalence relation on the set $\Phi X$ of rays in $X$.   
The set $\Omega X = \Phi X/ \sim $ is the set of {edge} ends of $X$.   An edge cut $A$ separates ends $\omega , \omega '$
if it separates rays representing $\omega , \omega '$.     A cut $A$ separates an end $\omega $ and a vertex $v  \in VX$ if for any ray representing
$\omega $,   $R$ is eventually in $A$ and $v \in A^*$ or vice versa.

%A graph has   more than one (edge) end if there is a cut $A \in \B X$ such that both $A$ and $A^*$ are both infinite.

 We define a network  $N$ to be a simple, connected graph  $X$  and a map $c : EX \rightarrow \{1, 2, \dots \}$.
If  $X$ is  a network
in which each edge has capacity $1$, then $\B X$ is the set of edge cuts, and if $A \in \B X$, then $c(A) = |\delta A|$.

The following result is proved in \cite {[D2]}.

\begin {theo}\label {maintheoremB}  Let $N(X)$ be a network in which $X$ is an arbitrary connected graph.  For each $n >0$, there is a network $N(T_n)$ based on a tree  $T_n$ and a map $\nu : VX\cup \Omega X \rightarrow VT\cup \Omega T$, such
that  $\nu (VX ) \subset VT$ and  $\nu x = \nu y$ for any $x, y \in VX \cup \Omega X$ if and only if $x, y$ are not separated by a cut $A$ with $c(A) \leq n$. 

The network $N(T_n)$ is uniquely determined and is invariant under the automorphism group of $N(X)$.

\end {theo}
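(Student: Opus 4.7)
The plan is to reduce the theorem to Dunwoody's standard construction of a tree from a nested family of cuts. Set $\B_n X := \{A \in \B X \mid c(A) \leq n\}$. The main task is to extract from $\B_n X$ a subset $\ce_n$ that is (i) closed under the involution $A \mapsto A^*$, (ii) \emph{nested} in the sense that for any $A, B \in \ce_n$ one of the four ``corners'' $A \cap B$, $A \cap B^*$, $A^* \cap B$, $A^* \cap B^*$ is empty, (iii) invariant under the automorphism group of $N(X)$, and (iv) still rich enough that two elements of $VX \cup \Omega X$ are separated by some element of $\ce_n$ whenever they are separated by some element of $\B_n X$. Granted such an $\ce_n$, the tree $T_n$ is obtained in the usual way: directed edges are the elements of $\ce_n$ (with involution $A \mapsto A^*$), vertices are the equivalence classes of $VX \cup \Omega X$ under ``not separated by any element of $\ce_n$'', and the map $\nu$ is the associated quotient.

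The real work is producing $\ce_n$. I would define $A \approx B$ when $A + B$ is finite and work inside each $\approx$-class, where set inclusion modulo finite sets gives a partial order. Within each class one selects a distinguished representative by first minimizing $c(\cdot)$ and then, among the minimizers, making a canonical tie-breaking choice that is intrinsic to $N(X)$. The decisive input is the submodular inequality
\[
c(A \cap B) + c(A \cup B) \leq c(A) + c(B),
\]
together with its symmetric counterpart $c(A \cap B^*) + c(A^* \cap B) \leq c(A) + c(B)$, both valid for capacities of cuts in a network. If two canonical representatives $A, B$ crossed, at least one of these inequalities would force two of the corner cuts to have capacity no larger than the minima in their respective $\approx$-classes; a small counting argument then produces a strictly better representative in one of those classes, contradicting canonicality. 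Hence the canonical representatives automatically form a nested set, which I take as $\ce_n$.

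With $\ce_n$ in hand, one verifies that the resulting $T_n$ is genuinely a tree by establishing the finite-interval property: between any $A \subseteq B$ in $\ce_n$ there are only finitely many intermediate elements. For cuts of capacity at most $n$ this follows from a pigeonhole argument using the finite set $\delta A \cup \delta B$, since any intermediate $C$ has $\delta C$ lying essentially inside this set. Property (iv) is then what delivers the separation characterization for $\nu$, while the uniqueness of $T_n$ and its invariance under the automorphism group of $N(X)$ drop out because every step of the construction (the $\approx$-classes, the capacity minimization, the tie-breaker, the quotient) is intrinsic.

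The main obstacle is the canonicalization step, particularly making it simultaneously compatible with the partial order, with both submodular inequalities, and with the full automorphism group, given that $X$ need not be locally finite and a single $\approx$-class may contain infinitely many minimizing cuts. The locally finite case admits an immediate choice by finite minimization, but in the general setting one has to work harder, perhaps by passing to a suitably defined ``thin'' or ``optimal'' sub-class before tie-breaking. Once an intrinsic choice exists, the submodular inequality does essentially all the remaining geometric work, and the rest of the argument is a formal unwinding of the Dunwoody tree construction.
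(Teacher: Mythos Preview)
The paper does not prove this theorem itself; it quotes it from \cite{[D2]} and records only that the key step is Lemma~\ref{lem2.2}, the existence of a uniquely determined nested generating set $\ce_n$ for the Boolean ring $\B_n X$. So the only comparison available is between your sketch and that one reduction. Your overall architecture --- extract a canonical nested family and then run the standard nested-set-to-tree machine --- is correct, and submodularity of $c$ is indeed the engine. But the specific mechanism you propose has two genuine gaps.

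First, organising cuts by the relation $A\approx B\iff |A+B|<\infty$ does not make the uncrossing argument go through. If canonical representatives $A,B$ of distinct $\approx$-classes cross, the corners $A\cap B$, $A\cup B$ (or $A\cap B^*$, $A^*\cap B$) lie in \emph{other} $\approx$-classes, since the sets $A\setminus B$ and $B\setminus A$ are infinite; hence the bound $c(A\cap B)+c(A\cup B)\le c(A)+c(B)$ says nothing about minimality of $A$ or $B$ in their own classes, and no contradiction results. In addition, one representative per $\approx$-class need not preserve separation of \emph{vertices}: if $A$ separates $x$ from $y$ and $A'\approx A$, the finite set $A+A'$ may contain $x$ or $y$, so your property~(iv) fails for $VX$. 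The construction in \cite{[D2]} does not go through $\approx$-classes at all; as the inclusion $\ce_i\subseteq\ce_j$ in Lemma~\ref{lem2.2} indicates, $\ce_n$ is built by induction on $n$, and the submodular uncrossing is played off against cuts already admitted, not against an abstract class representative.

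Second, your finite-interval argument is simply wrong: for $A\subseteq C\subseteq B$ there is no reason for $\delta C$ to lie in or near $\delta A\cup\delta B$. Already on a finite path the intermediate single-edge cuts have boundaries disjoint from $\delta A\cup\delta B$. The finite interval condition is genuine content coming from the specific structure of the canonical $\ce_n$, not from a pigeonhole on boundary edges.
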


Theorem  \ref {maintheoremB} is proved by proving the following lemma.
Let $\B _n X$ be the subring of $\B X$ generated by the cuts $A$ such that $c(A) < n$

\begin {lemma} \label {lem2.2} There is a uniquely defined nested set $\ce _n$ of generators
of $\B _n X$, with the following properties:-
\begin {itemize}
\item [(i)]  If $G$ is the automorphism group of $N(X)$, then $\ce_n $ is invariant under $G$.
\item [(ii)] For each $i <j$, $\ce _i \subseteq  \ce _j$.

\end {itemize}
\end{lemma}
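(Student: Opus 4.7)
My plan is to construct $\ce_n$ inductively in $n$, using the submodularity of the capacity function as the main tool to replace crossing cuts with nested ones. Recall that for any $A, B \in \B X$, since $\delta(A\cap B) \cup \delta(A\cup B) \subseteq \delta A \cup \delta B$ (with each edge used at most as many times as on the right), we have the submodular inequalities
\[
c(A\cap B) + c(A\cup B) \;\leq\; c(A) + c(B), \qquad c(A\cap B^*) + c(A^* \cap B) \;\leq\; c(A) + c(B).
\]
Two cuts $A, B$ are \emph{nested} when one of the four ``corners'' $A\cap B$, $A\cap B^*$, $A^*\cap B$, $A^*\cap B^*$ is ``trivial'' in an appropriate sense (empty, or finite, depending on the chosen convention); otherwise they \emph{cross}. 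The central point is that submodularity forces at least one pair of opposite corners to have combined capacity at most $c(A)+c(B)$, so a crossing pair can be swapped for cuts of no greater capacity.

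First I would let $\F_n$ denote the set of all $A \in \B X$ with $c(A) \leq n$; these clearly generate $\B_n X$ as a Boolean ring. I would then define $\ce_n$ as the set of those $A \in \F_n$ which are ``optimal'' in a canonical way: no cut $B$ with $c(B) \leq c(A)$ properly refines $A$ in the sense that $B \subsetneq A$ and $\delta B \subseteq $ some corner of a crossing with $A$. Because the capacities are positive integers and submodularity limits how small corners can be, one shows via a descent argument that optimal cuts exist and that the optimal cuts of capacity $\leq n$ still generate $\B_n X$: starting from any $A \in \F_n$, either $A$ is already optimal or we can replace it with a strictly ``thinner'' cut, and this process terminates in finitely many steps because capacities are bounded below by~$1$.

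Next I would show that any two elements of $\ce_n$ are nested. If $A, B \in \ce_n$ were to cross, the four corners would, by submodularity, produce a strictly smaller optimal refinement of one of them, contradicting optimality. This is the main technical step, and it is where I expect the most care is needed: one has to choose the notion of ``optimal'' precisely enough that the exchange argument works uniformly and does not fall into loops. The canonical nature of the definition immediately gives invariance under the automorphism group $G$ of $N(X)$, because $G$ preserves capacities, and therefore preserves the property of being optimal. The nesting relation is likewise $G$-equivariant.

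For property (ii), that $\ce_i \subseteq \ce_j$ whenever $i < j$, I would argue that if $A \in \ce_i$, so $c(A) \leq i \leq j$, then $A$ remains optimal in the larger family $\F_j$: any proposed refinement of $A$ within $\F_j$ that is not in $\F_i$ has capacity strictly greater than $i \geq c(A)$, and so does not count as a refinement in the optimality test. Thus $A \in \ce_j$. Uniqueness follows because the defining property (``optimal refinement of capacity at most $n$'') depends only on the network $N(X)$, not on any arbitrary choices made during the construction. The step I expect to be the main obstacle is formulating the optimality condition crisply enough that both the existence of a generating nested set and its uniqueness fall out together, rather than requiring two separate Zorn-style arguments.
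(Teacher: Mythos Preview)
The paper does not actually prove Lemma~\ref{lem2.2}; it is imported from \cite{[D2]}, where the full construction of the canonical nested generating set is carried out. So there is no in-paper proof to compare against, only the reference.

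As for your sketch: the overall strategy---use submodularity of $c$ to uncross, isolate a canonical class of ``optimal'' cuts, and show they are nested and generate---is indeed the mechanism behind the result in \cite{[D2]}. But what you have written is not yet a proof, and the gap is precisely where you yourself flag it. Your definition of ``optimal'' (``no cut $B$ with $c(B)\le c(A)$ properly refines $A$ in the sense that $B\subsetneq A$ and $\delta B\subseteq$ some corner of a crossing with $A$'') is not well-formed: $\delta B$ is a set of edges while a corner is a set of vertices, and ``some corner of a crossing with $A$'' presupposes a second cut that has not been named. Without a precise notion of optimality you cannot run the descent argument, you cannot prove termination, and you cannot prove that two optimal cuts are nested. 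The sentence ``the four corners would, by submodularity, produce a strictly smaller optimal refinement of one of them'' hides the entire content of the lemma: submodularity only gives $c(A\cap B)+c(A\cup B)\le c(A)+c(B)$, which need not produce anything \emph{strictly} smaller, and even when it does one must argue that the corner is again optimal and separates what $A$ or $B$ separated.

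In \cite{[D2]} the optimality condition is made precise via cuts that are minimal (for inclusion) among those of a given capacity separating a given pair of vertices or ends, and the nestedness is established by a careful case analysis on corners together with a finiteness statement (only finitely many minimal cuts of bounded capacity separate a fixed pair). Your proposal would need to supply an equivalent level of precision before the exchange argument can be made rigorous; as it stands, the key lemma is asserted rather than proved.
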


We will only really be using Theorem \ref {maintheoremB} for networks in which every edge has capacity one.
%In this case, the capacity of a cut $A$ is the number of edges in $\delta A$.   The theorem then becomes the following.

\begin {theo}\label {maintheoremC}  Let  $X$ be  a  connected graph.  There is a uniquely determined sequence of structure trees 
  $T_n$ and a map $\nu : VX\cup \Omega X \rightarrow VT\cup \Omega T$, such
that  $\nu (VX ) \subset VT$ and  $\nu x = \nu y$ for any $x, y \in VX \cup \Omega X$ if and only if $x, y$ are not separated by a cut $A$ with $|\delta A|  \leq n$.
Each tree $T_n$ admits an action of the automorphism group of $X$.

\end {theo}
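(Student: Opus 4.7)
The plan is to derive this as the special case of Theorem~\ref{maintheoremB} in which every edge carries capacity one. First I would build the network $N(X)$ by setting $c(e) = 1$ for every $e \in EX$. Under this choice, $c(A) = |\delta A|$ for every $A \in \mathcal{B}X$, so the condition ``$c(A) \leq n$'' appearing in Theorem~\ref{maintheoremB} translates literally into ``$|\delta A| \leq n$'', which is exactly the condition appearing in the statement to be proved.

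Next I would check that the automorphism group of the network $N(X)$ equals the automorphism group of the underlying graph $X$. This is automatic because the capacity function is constant, and constant functions on $EX$ are preserved by every graph automorphism. Consequently, when Theorem~\ref{maintheoremB} produces trees $T_n$ invariant under $\operatorname{Aut}(N(X))$, these are trees invariant under $\operatorname{Aut}(X)$, giving the required action.

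The existence of $T_n$ together with the separation map $\nu : VX \cup \Omega X \to VT_n \cup \Omega T_n$ satisfying $\nu(VX) \subseteq VT_n$ and the stated characterization of the fibres of $\nu$, as well as the uniqueness of $T_n$, are then inherited directly from Theorem~\ref{maintheoremB}. Moreover, the nestedness of the generating sets $\mathcal{E}_i \subseteq \mathcal{E}_j$ provided by Lemma~\ref{lem2.2} yields a canonical sequence of structure trees $T_1, T_2, \ldots$ rather than an unrelated collection, matching the phrasing ``sequence of structure trees'' in the statement.

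In short, there is no genuine obstacle here: the work has already been absorbed into Theorem~\ref{maintheoremB} and Lemma~\ref{lem2.2}. The only non-trivial observation is the one about automorphisms of constant-capacity networks coinciding with graph automorphisms, and this is immediate. The role of the theorem is simply to record the specialization that will actually be used in what follows.
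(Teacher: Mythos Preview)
Your proposal is correct and matches the paper's approach exactly: the paper presents Theorem~\ref{maintheoremC} as the immediate specialization of Theorem~\ref{maintheoremB} to the network with all capacities equal to one, noting just before the statement that this is the only case actually needed, and remarking afterward that $ET_n = \ce_n$. Your observation that $\operatorname{Aut}(N(X)) = \operatorname{Aut}(X)$ for constant capacity is the only point requiring comment, and it is indeed trivial.
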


In this case $ET_n = \ce _n$.

In any tree $T$ if $p$ is a vertex and $Q$ is a set of  unoriented edges, then there is a unique set of vertices $P$ such that $v \in P$ then the geodesic $[v,p]$
contains an odd number of edges from $Q$.   We then have $\delta P = Q$.
Note that $\B T = \B_1T$ and every element of $\B T$ is uniquely determined by the set $Q$ together with the information for a fixed  $p\in VT$ whether $p \in A$ or $p \in A^*$.    The vertex $p$ induces an orientation $\co _p$ on the set of pairs $\{ e, \bar e\}  $ of oriented edges by requiring that 
$e \in \co$ if $e$ points at $p$.      For $A \in \B T$,  $A$ is uniquely determined by $\delta A $ together with the orientation $\co _p \cap \delta A$  of the edges of $\delta A$.

In $X$ it is the case that a cut $A$ is uniquely determined by $\delta A$  together with the information for a fixed  $p  \in VX$ whether $p \in A$ or $p \in A^*$.

Since $\B _nX$ is generated by $\ce _n = ET_n$,    the cut $A$ can be expressed in terms of a finite  set of oriented edges of $T_n$.      This set is not usually uniquely
determined.    Thus if $\nu $ is not surjective, and $v$ is not in the image of $\nu $,  and  the set of edges incident with $v$ is finite, then $VX$  is the union of these elements in $\B X$.       The empty set is the intersection of the complements of these sets.     Orienting the edges incident with
$v$ towards $v$ gives the empty set and orienting them away from $v$ gives all of $VX$.
However there is a canonical way of expressing an element of $\B _nX$ in terms of the generating set $\ce _n$.      To see this let $A \in \B _nX  -  \B _{n-1}X$.    There are only finitely many $C \in \ce _n$ with which $C$ is not nested.
This number is $\mu (A, \ce _n)  = \mu (A)$.  We use induction on $\mu (A)$.   Our induction hypothesis is that there is a canonically defined way
of expressing $A$ in terms of the $\ce _n$.    Any two ways of expressing $A$ in terms of $\ce _n$ differ by an expression which gives the empty
set in terms of $\ce _n$.   Such an expression will correspond to a finite set of vertices each of which has finite degree in $T_n$ and  none of which is in the image of $\nu $.    The canonical expression is obtained if there is a unique way of saying whether or not  each such vertex is in the expression for $A$.     Thus the canonical expression for $A$ is determined by  a set of vertices of $VT$ which consists of the vertices of 
$\nu (A)$ together with a recipe for deciding for each vertex which is not in the image of $\nu $ whether it is in the expression for $A$.

Suppose $\mu (A) =0$, so that $A$ is nested with every $C \in \ce _n$, and neither $A$ nor $A ^*$ is empty.      If $A \in \ce _n$, then
this gives an obvious way of expressing $A$ in terms of the $\ce _n$.   If $A$ is not in $\ce _n$, then it corresponds to a unique vertex 
$z \in VT_n$.   Thus because $\mu (A) = 0$,  $A$ induces an orientation of the edges of  $\ce _n$.   To see this,  let $C \in \ce _n$, then  just
one of $C \subset A,  C^* \subset A,  C\subset A^*, C^* \subset A^*$ holds.   From each pair $C, C^*$ we can choose $C$ if $C \subset A$ or
$C \subset A^*$ and we choose $C^*$ if $C^*\subset A$ or $C^* \subset A^*$.    Let $\co$ be this subset of $\ce $.    Then If $C \in \co$ and $D \in \ce
$ and $D \subset C$,  then $D \in \co$.   This means that the orientation $\co$ determines a vertex $z$ in $VT _n$.  Intuitively the edges of $\co $ point at the vertex $z$.
 It can be seen that $A$ or $A^*$ will be the union of finitely many edges $E$ of $\ce _n = ET_n$,  all of which have $\tau E = z$.    If $A$ is such a union, then
we use this to express $A = C_1\cup C_2, \dots \cup C_k$.   If $A$ is not such a union, but $A^* = C_1\cup C_2, \dots \cup C_k$, then we write
$A = (C_1\cup C_2, \dots \cup C_k)^* = C_1^*\cap C_2^*\cap \dots \cap C_k^*$.        
 Note that this gives a unique way of expressing cuts corresponding to a vertex $z$ of finite degree not in the image of $\nu$.   The vertex $z$ is included in the expression for $A^*$ if and only if only  finitely many cuts in $\ce _n$ incident with $z$ and pointing at $z$ are subsets of $A$.
     Suppose then that the hypothesis is true for elements $B \in \B _nX$ for which $\mu (B) < \mu (A)$.
Let $C \in \ce _n$ be not nested with $A$.   Then $ \mu (A\cap C) +\mu (A\cap C^*) \leq  \mu (A)$.      Thus each of $A\cap C$ and $A\cap C^*$
can be expressed in a unique way in terms of the $\ce _n$.  If at most one of these expressions involves $C$ then we take the expression for 
$A$ to be the union of the two expressions for $A\cap C$ and $A\cap C^*$.   If both of the expressions involve $C$, then we take the expression
for $A$ to be the union of the two expression with $C$ deleted.      The expression obtained for $A$ is independent of the choice of $C$.
In fact the decomposition will involve precisely those $C$ for which  $C$ occurs in just one of  the decompositions for  $A\cap C$ and $A\cap C^*$.
We therefore have a canonical decomposition for $A$.     To further  clarify this proof observe the following.   The edges $C$ which are not nested
with $A$ form the edge set of a finite subtree $F$ of $T_n$.  If $EF \not= \emptyset$   we can  choose $C$ so that it is a twig of $F$, i.e. so that one vertex $z$  of $F$
is only incident with a single edge $C$ of $F$.  By relabelling $C$ as $C^*$ if necessary we can assume that $\mu (A\cap C) = 0$. 
The vertex determined by $A\cap C$ as above is $z$,  and we have spelled out the recipe for if this vertex is to be included in the expression
for $A$.   The induction hypothesis gives us a canonical expression for $A\cap C^*$, which together with the expression for $A\cap C$
gives the expression for $A$.

\section {Relative Structure Trees}
We prove Conjecture \ref {KC} in the case when $G$ is finitely generated over $H$, i.e.  $G$ is generated by $H\cup S$ where $S$ is finite.

First, we explain the strategy of the proof.     Suppose that we have a  non-trivial $G$-tree $T$ in which every edge orbit contains an edge  which has an $H$-finite stabiliser, and 
suppose there is a vertex $\bar o$ fixed by $H$.
Let $T_H$ be an   $H$-subtree of  $T$  containing $\bar o$  and every edge with $H$-finite stabiliser.  The action of $H$ on $T_H$ is a trivial action, since 
it has a vertex fixed by $H$, 
and so  the orbit space $H\backslash T_H$ is a tree, which might well be finite, but must have at least one edge.     Our strategy is to show that if
$G$ is finitely generated over $H$ and there is an $H$-almost invariant set $A$ satisfying $AH =A$, then we can find a $G$-tree $T$ with the required properties by first deciding what $H\backslash T_H$ must be and then lifting to get $T_H$ and then $T$.

     We show that if $G$ is finitely generated over $H$, then there is
a $G$-graph $X$ if which there is a vertex with stabiliser $H$ and in which a proper $H$-almost invariant set $A$ satisfying $AH = A$ corresponds to a proper set of vertices with $H$-finite coboundary.     It then follows from the theory of  \cite {[D2]}, described in the previous section, that there is a sequence of structure trees
for $H\backslash X$.      We choose one of these to be $H\backslash T_H$, and show that we can lift this to obtain $T_H$ and then $T$ itself.

For example if $G = {H*}_KL$ then there is a $G$-tree $Y$ with one orbit of edges and  a vertex $\bar o$ fixed by $H$,  and every edge incident with
$\bar o$ has $H$-finite stabiliser.   Suppose that $K, L$ are  such that these are the only edges with $H$-finite stabilisers.   Then $H\backslash T_H$ 
has two vertices and one edge.      When we lift to $T_H$ we obtain an $H$-tree of diameter two in which the middle vertex $\bar o$ has stabiliser
$H$.   The tree $T$ is covered by the translates of $T_H$.   

On the other hand, if $G = L*_KH$ where  $K$ is finite, and $T$ is as above,  then every edge of $T$ is $H$-finite and so $T_H$
is $T$ regarded as an $H$-tree.     The fact that our construction gives a canonical construction for $H\backslash T_H$ means that 
when we lift to $T_H$ and $T$ we will get the unique tree that admits the action of $G$.

We proceed with our proof.
\begin {lemma}  The group $G$ is finitely generated over $H$ if and only if there is a connected $G$-graph $X$ with one orbit of vertices, and finitely
many orbits of edges, and there is a vertex $o$ with stabiliser $H$.
\end {lemma}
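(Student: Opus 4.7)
Plan.

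The statement is a relative version of the fact that a group is finitely generated iff it admits a locally finite transitive action (a Cayley graph construction). I will handle the two implications symmetrically. The main work is the backward direction (the Cayley-type construction); the forward direction amounts to reading off generators from a path.

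\textbf{Backward direction.} Assume $X$ is given, with vertex $o$ of stabiliser $H$, one orbit of vertices, and finitely many edge orbits. Since $G$ acts transitively on $VX$, the orbit map $g\mapsto go$ induces a $G$-equivariant bijection between $G/H$ and $VX$. Because there are only finitely many edge orbits and the vertex action is transitive, each edge orbit contains an edge incident with $o$; pick one such representative per orbit, say with other endpoint $s_i o$, and let $S=\{s_1,\dots,s_n\}\cup\{s_1^{-1},\dots,s_n^{-1}\}$. I then claim $G=\langle H\cup S\rangle$. Given $g\in G$, connectedness of $X$ gives a path $o=g_0 o,g_1 o,\dots,g_k o=go$. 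For each consecutive pair the edge between $g_{j-1}o$ and $g_j o$ is in the $G$-orbit of some $(o,s_{i_j} o)$, so there exists $g'\in G$ with $g'o=g_{j-1}o$ and $g' s_{i_j} o= g_j o$; this forces $g_{j-1}^{-1}g_j\in H s_{i_j} H$. Composing, $g_k\in (HSH)^k$, and $g_k^{-1}g\in H$, so $g$ lies in the subgroup generated by $H\cup S$.

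\textbf{Forward direction.} Assume $G=\langle H\cup S\rangle$ with $S$ finite; without loss of generality $S\cap H=\emptyset$ and $S=S^{-1}$. Define $X$ by $VX=G/H$, with $G$ acting by left multiplication, and declare $gH$ adjacent to $g'H$ iff $g^{-1}g'\in HsH$ for some $s\in S$. I need to check four things. First, the edge relation is well-defined on cosets, because $HsH$ is left- and right-$H$-invariant; it is symmetric because $(HsH)^{-1}=Hs^{-1}H$, and our assumption $S\cap H=\emptyset$ prevents loops, so $X$ is a simple graph. Second, the stabiliser of $o=H$ is exactly $H$, and there is one vertex orbit. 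Third, for each $s\in S$ the set of pairs $(gH,gsH)$ is a single $G$-orbit of edges, giving at most $|S|$ edge orbits (merging when double cosets coincide). Fourth, connectedness: any $g\in G$ may be written as a finite word $w_1w_2\cdots w_k$ with $w_j\in H\cup S$, and the sequence $H,w_1H,w_1w_2H,\dots,gH$ is a walk in $X$ (steps with $w_j\in H$ are stationary, the others are edges).

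\textbf{Expected obstacle.} Both directions are essentially bookkeeping, but the one subtle point is the well-definedness of the edge relation in the Cayley construction, which is why I phrase it using double cosets $HsH$ rather than naive left translation; this is also what ensures that each $s$ contributes exactly one $G$-orbit of edges and that the total number of orbits is finite. Everything else is routine.
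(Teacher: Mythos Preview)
Your proof is correct and follows essentially the same approach as the paper: the coset graph on $G/H$ with edges induced by $S$ in one direction, and reading off generators from edge-orbit representatives incident with $o$ in the other. The only cosmetic differences are that the paper proves connectedness by showing the component of $o$ is stable under right multiplication by $H\cup S$ (rather than exhibiting an explicit walk), and leaves the converse direction as ``not hard to show'', which you have spelled out in full.
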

\begin {proof}   Suppose $G$ is generated by $H\cup S$, where $S$ is finite.
 Let  $X$ be the graph with $VX = \{ gH | g \in G\}$ and
in which $EX$ is the set of unordered pairs $\{ \{ gH, gsH \},   g \in G, s \in S\} $.    We then have that   $X$ is vertex transitive,  there is a vertex
$o = H$ with stabilizer $H$ and $G\backslash X$ is finite.  We have to show that $X$ is connected.   Let $C$ be the component of $X$ containing 
$o$.     Let $G'$ be the set of those $g \in G$ for which $gH \in C$.   Clearly $G'H = G'$ and $G's = G'$ for every $s \in S$.   Hence $G' = G$ and $C = X$.   Thus $X$ is connected.

Conversely let  $X$ be a connected $G$-graph and $VX = Go$ where $G_o = H$.      Suppose $EX$ has finitely many $G$-orbits,  $Ge_1, Ge_2,
\dots , Ge_r$ where $e_i$  has vertices $o$ and $g_io$.    It is not hard to show that $G$ is generated by $H\cup \{ g_1, g_2, \dots , g_r\} $.

\end{proof}

Let $A \subset G$ be  a proper $H$-almost invariant set satisfying $AH = A$.   Let $G$ be finitely generated over $H$, and let $X$ be a $G$-graph
as in the last lemma.
There is a subset of  $VX$ corresponding to $A$, which is also denoted $A$.      For any $x \in G$,    $A +Ax$ is $H$-finite.   In particular
this is true if $s \in S$.   This means that $\delta A$ is $H$-finite.  Note that neither $A$ nor $A^* = VX - A$ is $H$-finite.
Thus a proper $H$-almost invariant set corresponds to a proper subset of $VX$ such that $\delta A$ is $H$-finite.

 From the previous section (Lemma \ref{lem2.2}) we know that $\B(H\backslash X)$ has a uniquely determined nested set of generators $\ce = \ce (H\backslash X)$.
 For $E \in \ce $, let $\bar E \subset VX$ be the set of all $v \in VX$ such that $Hv \in E$.   Let $C$ be a component of $\bar E$.  
 \begin {lemma}  For $h \in H$,  $hC = C$ or $hC\cap C = \emptyset$.   Also $HC = \bar E$,  $h\delta C \cap \delta C =\delta C$ or $h\delta C \cap \delta C = \emptyset$ and $H\backslash \delta C = \delta E$.   
 \end {lemma}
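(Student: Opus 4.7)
The plan is to exploit the fact that $\bar E = \pi^{-1}(E)$, where $\pi\colon X \to H\backslash X$ is the quotient, is $H$-invariant; indeed $\pi\circ h = \pi$ for every $h\in H$. Since $h$ acts on $X$ as a graph automorphism, it permutes the connected components of the induced subgraph on $\bar E$, and distinct components are disjoint. This immediately yields the dichotomy $hC = C$ or $hC \cap C = \emptyset$, which is the first assertion.

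Both coboundary statements then reduce to the observation that no edge of $X$ can join two different components of $\bar E$, since by maximality such an edge would place its endpoints in a common component. Thus if $hC \neq C$ and $e \in \delta C \cap h\delta C$ has endpoints $u,v$, the only possibility is $u\in C$ and $v\in hC$, producing an edge of the induced subgraph on $\bar E$ between distinct components---a contradiction. So $h\delta C \cap \delta C$ equals all of $\delta C$ when $hC = C$ and is empty otherwise, giving the second claim. For $H\backslash \delta C = \delta E$, one combines this with the identity $\delta\bar E = \bigcup_{h\in H} h\delta C$ (every edge of $\delta \bar E$ lies in the coboundary of whichever component of $\bar E$ contains its $\bar E$-endpoint) and the fact that $\pi$ carries $\delta\bar E$ onto $\delta E$; then $\pi(\delta C) = \pi(\delta\bar E) = \delta E$, which is the desired identification of $H$-orbits.

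The substance of the lemma is $HC = \bar E$, and the natural approach is path lifting along $\pi$. Any edge-path in the induced subgraph on $E$ starting at $\pi(v)$ for some $v\in C$ lifts edge-by-edge to a path in $\pi^{-1}(E) = \bar E$ beginning at $v$, and such a lift must remain inside the single component $C$. So $\pi(C)$ contains the whole connected component of $E$ through $\pi(v)$, and consequently $HC = \pi^{-1}(\pi(C))$ contains the corresponding $H$-saturation in $\bar E$. The main obstacle is therefore establishing that $E$ is itself a connected induced subgraph of $H\backslash X$; granted this, $\pi(C) = E$ and $HC = \pi^{-1}(E) = \bar E$. Connectedness of the basic cuts in $\ce$ is the one property we must invoke here from the structure-tree construction of Lemma~\ref{lem2.2}, and it is the only place where the particular choice of canonical generating set really intervenes.
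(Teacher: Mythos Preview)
Your proof is correct and follows essentially the same approach as the paper's: both use that $H$ permutes the components of $\bar E$ and then invoke connectedness of $E$ (a property coming from the structure-tree construction) to conclude $\pi(C)=E$, hence $HC=\bar E$. Your version is more explicit---you spell out the path-lifting step and the coboundary dichotomy, whereas the paper compresses these into a single clause---but the underlying argument is the same.
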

 \begin {proof} Let $h \in H$.   Then $hC$ is also a component of $\bar E$, since $HC \subseteq E$.
Thus either $hC =C$ or $hC\cap C = \emptyset $.   Let $K$ be the stabilizer of $C$ in $H$.    if $v  \in  C$ then $hv \in  C$ if and only if
$h \in K$. Thus $K\backslash C$ injects into $H\backslash C = E$  and $K \backslash \delta C$ injects into $\delta E$.   But $E$ is connected,
and so the image $HC$ is $E$.  It follows that there is a single $H$-orbit of components.

  \end {proof}
 It follows from the lemma  that it is also   the case that $C^*$ is connected, since any component of $C^*$ must have coboundary 
 that includes an edge from each orbit of $\delta C$.    Let $\bar \ce (H, X)$ be the set of all such $C$, and let $\bar \ce _n(H, X)$  be the subset of $\bar \ce (H, X)$ corresponding to those $C$ for which $\delta C$ lies in at most $n$  $H$-orbits.
 
 \begin {lemma}   the set $\bar \ce (H,X)$ is a nested set.  The set $\bar \ce _n(H, X)$ is the edge set of an $H$-tree.
 \end {lemma}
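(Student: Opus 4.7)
The plan is to prove nestedness of $\bar\ce(H,X)$ directly from the preceding lemma, and then assemble $\bar\ce_n(H,X)$ into an $H$-tree by verifying the same properties that were used to construct $T_n$ in the previous section.

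For nestedness, I would take $C, C' \in \bar\ce(H,X)$, components of $\bar E$ and $\bar{E'}$ with $E, E' \in \ce$. If $E = E'$, the preceding lemma gives $C = C'$ or $C \cap C' = \emptyset$. Otherwise, nestedness of $\ce$ (Lemma \ref{lem2.2}) reduces to one of the containments $E \subseteq E'$, $E \subseteq (E')^*$, $E^* \subseteq E'$, $E^* \subseteq (E')^*$. The preceding lemma contributes two key facts about preimages in $VX$: $\bar{E'}$ is the disjoint union of the $H$-translates of $C'$, and $(C')^*$ is connected. When $C \subseteq \bar{E'}$, connectedness of $C$ forces it into a single translate $hC'$, and depending on whether $h$ stabilises $C'$ or not this yields $C \subseteq C'$ or $C \cap C' = \emptyset$. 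When $C \subseteq \overline{(E')^*} \subseteq (C')^*$, we directly obtain $C \subseteq (C')^*$. The remaining two cases follow by symmetry (swapping $E$ with $E^*$, or $C$ with $C^*$). In every case one of $C\cap C'$, $C\cap (C')^*$, $C^*\cap C'$, $C^*\cap (C')^*$ is empty, which is nestedness.

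For the tree assertion, I would show that $\bar\ce_n(H,X)$ is a nested, $H$-invariant family of cuts on $VX$ which is discrete in the sense that between any two of its members only finitely many others lie; the structure-tree construction of the previous section, applied now to the $H$-space $VX$ rather than $V(H\backslash X)$, then delivers the $H$-tree. Nestedness is in hand and $H$-invariance is immediate. For discreteness, I would fix $C, C' \in \bar\ce_n(H,X)$ as components of $\bar E, \bar{E'}$; any intermediate $D$ is a component of some $\bar{E''}$, and $E''$ must lie on the $T_n$-geodesic from $E$ to $E'$, which is finite, so only finitely many $E''$ qualify. For each such $E''$, I would argue that only those $H$-translates of its distinguished component whose coboundary meets the $H$-finite set $\delta C \cup \delta C'$ can sit strictly between $C$ and $C'$, so only finitely many translates contribute.

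The main obstacle I expect is this final finiteness step: a single edge of $\ce_n$ can have an infinite $H$-orbit of preimage components in $\bar\ce_n(H,X)$, so some effort is needed to verify that only finitely many of them fit between a fixed pair $C, C'$. I expect this to follow from the $H$-finiteness of $\delta C$ and $\delta C'$ combined with the nestedness already established. Once this is secured, the $H$-tree structure emerges formally from the general construction recalled in the previous section, with the $H$-action inherited from the $H$-action on $\bar\ce_n(H,X)$.
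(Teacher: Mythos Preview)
Your nestedness argument is correct and is essentially the paper's, spelled out in more detail: the paper just observes that $HC\subset HD$ forces $C\subset D$ or $C\cap D=\emptyset$, which is precisely your connectedness-of-$C$ argument.

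For the finite interval condition your reduction to finitely many $E''\in\ce_n$ on the $T_n$-geodesic is right, but you then misidentify the remaining difficulty. You worry that a fixed $E''$ might contribute infinitely many $H$-translates $hD_0$ lying between $C$ and $C'$, and propose to bound them by requiring $\delta(hD_0)$ to meet $\delta C\cup\delta C'$. That claim is false in general---nested cuts can have pairwise disjoint coboundaries, already for $H$ trivial on a long path---and it is also unnecessary. The distinct $H$-translates of a component of $\bar{E''}$ are pairwise disjoint, and $C$ is connected, so there is \emph{exactly one} translate containing $C$; hence at most one $D\in\bar\ce_n$ over each $E''$. This is what the paper records as ``$HC=HE$ implies $C=E$'', and it closes the argument without any appeal to coboundaries.

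The paper also treats a case you do not isolate: $C\cap D=\emptyset$ with $C,D\in\bar\ce_n$. Here $D^*$ need not lie in $\bar\ce_n$, and the containment $HC\subseteq HD$ can fail, so your geodesic-in-$T_n$ reduction does not apply directly. The paper handles this with the basepoint $o$ (which one may take in $C^*\cap D^*$): any $E\in\bar\ce_n$ with $C\subset E\subset D^*$ satisfies either $o\in E^*$, so $E$ is among the finitely many cuts separating $o$ from $C$, or $o\in E$, so $E$ is among the finitely many with $D\subset E^*$ and $o\in E$; each of these reduces to the case already handled. You should make sure your write-up covers this, since the criterion from \cite{[D1]} requires finite intervals for all nested pairs, not just those with $C\subset C'$ both in $\bar\ce_n$.
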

 \begin {proof}  Let $C, D \in \bar  \ce _n(H,X)$.    Then $HC, HD$ are in the nested set $\ce $.    Suppose $HC \subset HD$,   then $C \subset D$ or
 $C\cap D = \emptyset $.  It follows easily that $\bar \ce (H, X)$ is nested.   It was shown in \cite {[D1]} that a nested set $\ce $ is the directed edge set of
 a tree if and only if it satisfies the finite interval condition, i.e. if $C, D \in \ce $ and $C \subset D$,  then there are only finitely many $E \in \ce $ such that
 $C \subset E \subset D$.
 Thus we have to show that $\bar \ce _n(H, X)$ satisfies the finite interval condition.  If  $C\subset D$ and $C\subseteq E \subseteq D$ where
 $C, E,D \in \bar \ce _n(H,X)$, then $HC \subseteq HE \subseteq HD$.  But $\ce _n(H, X)$ does  satisfy the finite interval condition and 
 $HC = HE$ implies $C = E$.     Now let $C\cap D = \emptyset $ and suppose that $o = H \in C^*\cap D^*$.  There are only finitely
 many $E \in \bar \ce _n$ such that $C \subset E$ and $o \in E^*$ or such that $D \subset E^*$ and $o \in E$.     Each $E \in \bar \ce _n$ 
 such that $C \subset E \subset D^*$ has one of these two properties.
 
 \end {proof}

Let $\bar T = \bar T(H)$ be the tree constructed in the last Lemma.   Let $T  = H\backslash \bar T$.
Note that in the above $\bar T(H)$ is the Bass-Serre   $H$-tree associated with the quotient graph $T(H) = H\backslash \bar T(H)$ and the graph
of groups obtained by associating appropriate labels to the edges and vertices of this quotient graph (which is a tree).    Clearly the action of $H$ on $T(H)$ is a trivial action in that $H$ fixes the vertex $\bar o = \nu o$.   The stabilisers of edges or vertices on a path or ray beginning at $\bar o$ will form a non-increasing sequence of subgroups of $H$.
 
We now adapt the argument of the previous section to show that if $A \subset VX$ is such that $\delta A$ lies in at most $n$  $H$-orbits,  then there is a canonical way
of expressing $A$ in terms of the set $\bar \ce  (H, X)$.    In this case we have to allow unions of infinitely many elements  of the generating set.
Our induction hypothesis is that if $\delta A$ lies in at most $n$ $H$-orbits, then $A$ is canonically  expressed in terms of $\bar \ce _n (H, X)$.
First note that there are only finitely many $H$-orbits of elements of $\bar \ce _n = \bar \ce _n(H,X)$ with which $A$ is not nested.  This is because 
if $C \in \bar \ce _n$ is not nested with $A$ and $F$ is a finite connected subgraph of $H\backslash X$ containing all the edges of $H\delta A$,
then $H\delta C$ must contain an edge of $F$ and there are only finitely many elements of $\ce _n$ with this property.
We now let $\mu (A)$ be the number of $H$-orbits of elements of $\bar \ce _n$ with which $A$ is not nested.
If $\mu (A) = 0$, then $A$ is nested with every $C \in \bar \ce _n$.   This then means that if neither $A$ nor $A^*$ is empty and it is not
already in $\bar\ce _n$, then $A$ determines a vertex $z$  of $\bar T_n$  and either $A$ or $A^*$ is the union (possibly infinite) of edges of
$T_n$ that lie in finitely many $H$-orbits.  
 If $A$ is such a union, then
we use this union for our canonical expression for $A$.   If $A$ is not such a union, then $A^*$ is;   we have $A^* = \bigcup \{ C_{\lambda} | \lambda \in \Lambda \}$, where each $C_{\lambda }$ has $\tau C_{\lambda } = z$ and the edges lie in finitely many $H$-orbits.   We write $A = ( \bigcup \{ C_{\lambda} | \lambda \in \Lambda \})^*  = \bigcap    \{ C_{\lambda}^* | \lambda \in \Lambda \}$.         Note that this gives a canonical way of expressing cuts corresponding to a vertex 
that is not in the image of $\nu $ and whose incident edges lie in finitely many $H$-orbits.        Suppose then that the hypothesis is true for elements $B$ for which $\mu (B) < \mu (A)$.
Let $C \in \bar  \ce _n$ be not nested with $A$.   Then $ \mu (A\cap HC) +\mu (A\cap HC^*) \leq  \mu (A)$.      Thus each of $A\cap HC$ and $A\cap HC^*$
can be expressed in a unique way in terms of the $\ce _n$.   We take the expression for 
$A$ to be the union of the two expressions for $A\cap HC$ and $A\cap HC^*$ except that we include  $hC$  for $h \in H$, only if just one of the two expressions involve $hC$.

If $g \in G$, then $g\bar T(H)$ is a $(gHg^{-1})$-tree.  It is the tree $\bar T(gHg^{-1})$ obtained from the $G$-graph $X$ by using the vertex $go$
instead of $o$.    We now show that there is a $G$-tree $T$ which contains all of the trees $g\bar T(H)$.

We know that the action of the group $G$ on $X$  is vertex transitive and that $X$ has a vertex $o$ fixed by $H$.  Also $G$ is generated by
$H\cup S$ where $S$ is finite.

Clearly there is an isomorphism  $\alpha _g : \bar T(H) \rightarrow \bar T(gHg^{-1})$ in which $D \mapsto gD$.

Suppose now that $\nu o \not= \nu (go)$. 
Let  $A, B $ be $H$-almost invariant sets satisfying $AH = A, BH = B$ and let $g \in G$.  We regard $A,B$ as subsets of $VX$, so that 
$\delta A$ and $\delta B$ are $H$-finite.

Suppose that $o \in gB^*$ and $go\in A^*$. 
 The following Lemma is due to Kropholler \cite {[K90]}, \cite {[K91]}.
 We put $K = gHg^{-1}$.
\begin {lemma}\label {Krop}  In this situation $\delta (A\cap  gB)$is  $(H\cap K)$-finite.
\end {lemma}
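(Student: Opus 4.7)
The plan is to decompose $\delta(A \cap gB)$ according to the four-cell partition of $VX$ given by $(A,A^*) \times (gB,(gB)^*)$. Every edge $e \in \delta(A \cap gB)$ joins a vertex of $A \cap gB$ to a vertex in one of the other three cells, yielding $\delta(A \cap gB) = E_1 \cup E_2 \cup E_3$, where $E_1$ consists of edges in $\delta A$ with both endpoints in $gB$ (partner in $A^* \cap gB$), $E_2$ of edges in $\delta(gB)$ with both endpoints in $A$ (partner in $A \cap (gB)^*$), and $E_3$ of edges in $\delta A \cap \delta(gB)$ (partner in $A^* \cap (gB)^*$). Showing each $E_i$ is $(H \cap K)$-finite will complete the proof.

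Equivalently, working with subsets of $G$, the lemma asks that $(A \cap gB) + (A \cap gB)s$ be $(H \cap K)$-finite for every generator $s \in S$. The standard inclusion $(A \cap gB) + (A \cap gB)s \subseteq (A + As) \cup (gB + gBs)$, combined with the $H$-finiteness of $A + As$ and the $K$-finiteness of $gB + gBs = g(B + Bs)$, would finish if a union of an $H$-finite and a $K$-finite set were automatically $(H \cap K)$-finite; this fails in general. The needed refinement is that an element of $(A \cap gB) \setminus (A \cap gB)s$ lies either in $(A \setminus As) \cap gB$ or in $A \cap (gB \setminus gBs)$, so the task reduces to showing each of these two intersections is $(H \cap K)$-finite.

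For $(A \setminus As) \cap gB$, write $A \setminus As \subseteq \bigcup H g_i$ for finitely many $g_i \in G$; for each $i$, one must show $\{h \in H : h g_i \in gB\}$ is a union of finitely many left $(H \cap K)$-cosets. Here the $K$-almost invariance of $gB$ (equivalent to the $H$-almost invariance of $B$) controls how membership in $gB$ varies as $h$ moves, and the hypothesis $go \in A^*$ places the $K$-fixed vertex $go$ outside $A$, and in particular outside the corner $A \cap gB$: any path from an element $hg_i \in A \cap gB$ towards $go$ must then cross $\delta A$, an $H$-finite set, and combining this with the $K$-orbit structure of $gB$ collapses the admissible $h$'s into finitely many $(H \cap K)$-cosets. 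The set $A \cap (gB \setminus gBs)$ is handled symmetrically using $o \in (gB)^*$, anchoring the $H$-fixed vertex $o$ outside $gB$. The main obstacle is precisely this last orbit-counting step: the hypotheses on $o$ and $go$ single out the unique corner of the four-cell partition whose coboundary admits $(H \cap K)$-finite control, by simultaneously anchoring an $H$-fixed and a $K$-fixed vertex outside it, and this is the combinatorial heart of Kropholler's observation.
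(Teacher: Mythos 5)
Your reduction to showing that $(A\setminus As)\cap gB$ and $A\cap(gB\setminus gBs)$ are each $(H\cap K)$-finite is essentially the paper's own reduction (there with a general $x$ in place of a generator $s$), and your observation that the naive bound by $(A+As)\cup(gB+gBs)$ is useless is correct. But the step you yourself label ``the main obstacle'' is exactly the step you have not supplied, and your sketch of it points in the wrong direction. The actual mechanism is not a path-crossing argument in $X$; it is the purely group-theoretic lemma that a set which is both $H$-finite and $K$-finite is $(H\cap K)$-finite --- equivalently, that $H\cap Kx$ is either empty or a single coset $(H\cap K)h_0$. This fact is the combinatorial heart of Kropholler's observation, it is invoked explicitly in the paper's proof, and it never appears in your write-up; without it, ``collapses the admissible $h$'s into finitely many $(H\cap K)$-cosets'' is an assertion, not an argument. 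With it the step closes in one line: your set $\{h\in H: hg_i\in gB\}$ equals $H\cap gBg_i^{-1}$; since $gB$ is $K$-almost invariant (where $K=gHg^{-1}$), $gBg_i^{-1}\subseteq gB\cup KF$ for some finite $F$, and since $o\in gB^*$ forces $H\cap gB=\emptyset$, the set lies in $H\cap KF=\bigcup_{f\in F}(H\cap Kf)$, a finite union of $(H\cap K)$-cosets.

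Note also that you have attached the two hypotheses to the wrong corners. The piece $(A\setminus As)\cap gB$ is contained in $HE\cap gB$ and is therefore automatically $H$-finite; what has to be proved is its $K$-finiteness, and that uses $o\in gB^*$ together with the $K$-almost invariance of $gB$, as above --- the hypothesis $go\in A^*$ plays no role in this half. Symmetrically, $A\cap(gB\setminus gBs)$ is contained in $A\cap gHF=A\cap KgF$, hence automatically $K$-finite, and its $H$-finiteness is where $go\in A^*$ enters: $gH\subseteq A^*$ gives $A\cap gHf=(Af^{-1}\cap gH)f\subseteq HE'f$ with $E'$ finite, by the $H$-almost invariance of $A$. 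Each corner piece is then both $H$-finite and $K$-finite, and the lemma above finishes the proof. As written, your proposal identifies the correct decomposition but leaves the decisive coset-counting step unproved and misdiagnosed.
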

\begin {proof}   Let $x \in G$.  We show that the symmetric difference $(A\cap gB)x + (A\cap gB)$ is $(H\cap K)$-finite.
Since $A, B$ are $H$-almost invariant, there are finite sets $E, F$ such that $A+ Ax \subseteq HE$ and $B+Bx\subseteq HF$.
We then have

  $$ 
   (A\cap gB)x + (A\cap gB) = Ax\cap (gBx +gB) + (Ax + A)\cap gB 
= Ax\cap gHF + g(g^{-1}HE\cap B). 
  $$
  
  Now $Ax\cap gHF$ is $K$-finite, but it is also $H$-finite because $gH$ is contained in $A^*$, since $go \in A^*$. A set which is both $H$-finite and $K$-finite is $H\cap K$-finite.  Thus $Ax\cap gHF$ is $(H\cap K)$-finite.  Similarly using the fact that  $g^{-1}o \in B^*$, it follows that $g^{-1}HE\cap B$ is $H\cap (g^{-1}Hg)$-finite, and so $g(g^{-1}HE\cap B)$ is $(H\cap K)$-finite.  Thus $A\cap gB$ is $(H\cap K)$-almost invariant.  
  But this means that $\delta (A\cap gB)$ is $(H\cap K)$-finite.
  \end {proof}  
  What this Lemma says is that if $A, gB$ are not nested then there is a special corner - sometimes called the {\it Kropholler  corner} - which is $(H\cap K)$-almost invariant.    
    
 Notice that in the above situation all of $\delta A, \delta (A\cap gB^*)$ and $\delta  (A\cap gB)$ are $H$-finite.
 If we take the canonical decomposition for $A$, then it can be obtained from the canonical decompositions for $A\cap gB$ and $A\cap gB^*$
 by taking their union and deleting any edge that lies in both.   Also $\delta (gB)$ is $K$-finite and the decomposition for
 $gB$ can be obtained from those for $gB\cap A$ and $gB\cap A^*$.     But the edges in the decomposition for $A\cap gB$ which
 is $(H\cap K)$-almost invariant are the same in both decompositions.

  We will now show that it follows from Lemma \ref {Krop} that the set $G\bar \ce _n$ is a nested $G$-set which satisfies the final interval condition,
  and so it is the edge set of a $G$-tree.
  We have seen that $\bar \ce _n$ is a nested $H$-set where  $\ce _n = H\backslash \bar \ce _n$ is the uniquely determined  nested subset of $\B _n(H\backslash X)$ that generates
  $\B _n (H\backslash X)$ as an abelian group. It is the edge set of a tree $T _n(H\backslash X)$.
  
  %\eject
\begin{figure}[ht!]

\centering
\begin{tikzpicture}[scale=.6]

    \draw (0,4)--(8,4);
    \draw (4,0)--(4,8);

\draw (3.6, 7.8) node  {$A$};
\draw (4.6,7.8) node  {$A^*$};
\draw (.4, 4.4) node  {$gB$} ;
\draw (.4,3.6) node  {$gB^*$} ;
\draw (6,6) node  {$A^*\cap gB$};
\draw (2,6) node  {$A\cap gB$};
\draw (2,2) node  {$A\cap gB^*$};
\draw (6,2) node  {$A^*\cap gB^*$};

    \draw (10,5)--(13,5)-- (13, 8);
       \draw (10,3)--(13,3)-- (13, 0);
\draw (18,5)--(15,5)-- (15, 8);
       \draw (18,3)--(15,3)-- (15, 0);

   % \draw (14,0)--(14,8);
\draw [dashed] (13,3)--(15,5) ;
\draw [dashed] (15,3)--(13,5) ;

\draw (17,6) node  {$A^*\cap gB$};
\draw (11,6) node  {$A\cap gB$};
\draw (11,2) node  {$A\cap gB^*$};
\draw (17,2) node  {$A^*\cap gB^*$};

\draw (14,7) node  {$a$};
\draw (14,1) node  {$b$};
\draw (11, 4) node  {$c$};
\draw (17, 4) node  {$d$};
\draw (13.5, 3.5) node  {$e$};
\draw (14.5, 3.5) node  {$f$};

  \end{tikzpicture}

\vskip .5cm \caption{Crossing cuts}\label{Cuts}
\end{figure}
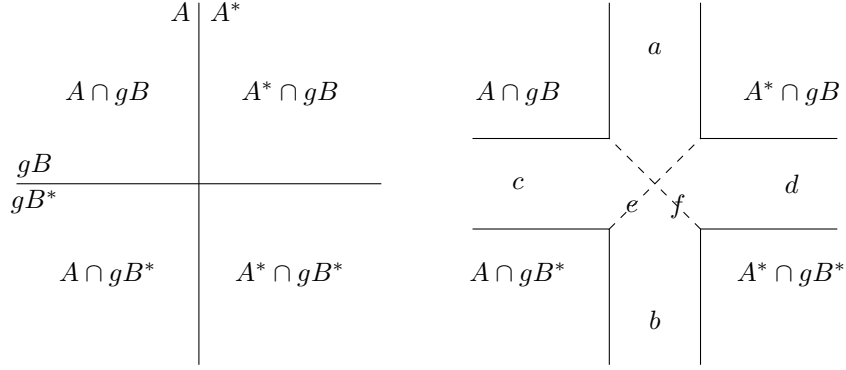

  If $A, B \in  \bar \ce _n$ and $A, gB$ are not nested for some $g \in G$, then by Lemma \ref {Krop} there is a corner -the Kropholler corner -, which we take to be $A\cap gB$, for which $\delta (A\cap gB)$ is $(H\cap K)$-finite.    We than have canonical decompositions for $A\cap gB$ and $A\cap gB^*$ as above.
  This is illustrated in Fig \ref {Cuts}.     The labels  $a, b, c, d, e, f$ are for sets of edges joining the indicated corners.  In this case the letters do not represent edges of $X$ but elements of $\bar \ce _n$.    Although each $E \in \bar \ce _n$ comes with a natural direction, in the diagram we only count 
  the unoriented edges,   i.e. we count the number of edge pairs $(E, E^*)$.    In the diagram,  $A\cap gB$ is always taken to be the Kropholler corner.
  Thus we have that any pair contributing to $a, f$ or $e$ must be $(H\cap K)$-finite.   Any pair contributing to $e$ or $b$ must be $H$-finite and any
  pair contributing to $e$ or $d$ must be $K$-finite.
  
  We have that $a + e + f +b = 1$ and $c + e + f + d =1$.     Suppose that the Kropholler corner $A\cap B$ is not empty. It is the case that  each of $o$
  and $go$ lies in one of the other three corners.     We know that $o \in gB^*, go \in A^*$.    If $o \in A\cap gB^*$ and $go \in A^*\cap gB$, then 
  $a = c = 1$ and $e =f = b = d =0$ and $A^*\cap gB^* = \emptyset$.      If $o \in A^*\cap gB$ and $go \in A^*\cap gB^*$, then $a = d =1$ and 
  $A\cap gB^* = \emptyset$,  while if both $o$ and $go$ are in $A^*\cap gB^*$,  then either $a =d =1$  and $A\cap gB^* = \emptyset$ or
   $a =c =1$  and $A^*\cap gB = \emptyset$ or  $f =1$ and both
  $A\cap gB^*$ and $A^*\cap gB$ are empty, so that  $A = gB$.  In all cases $A, gB$ are nested.

\begin{figure}[htbp]
\centering
\begin{tikzpicture}[scale=.5]
\draw [red]  (-.8, 1) -- (-.5,2) -- (-.5,3)-- (-1, 4) -- (-1, 5) -- (-1.4, 4) -- (-1. 6, 3)  ;
\draw [thick, dashed, red]   (-2, 2) --(-1.6,3) -- (-1.4, 2) ;
\draw [thick, brown]   (-.5,0) -- ( -.3, 1) --(-.5, 2)-- (1,2)--(2,1)--(3,2)--(4,1)--(5,2)--(6,1)--(7,2)--(8,1)--(9,2)--(10.5,2)  ;
\draw (-1,5) node {$\bullet$} ;
\draw (10.5,5) node {$\bullet$} ;
\draw [thick, blue]  (10.5,2) --(10.5, 3) --(11,4)--(10.5, 5) -- (11.5, 4) --( 11.5, 3)  ;
\draw [thick, brown] (3,-1) --( 2.5, 0) -- (3, 2) -- (3.5, 0) ;
\draw [thick, brown] (4.5,-1) --( 4.5, 0) -- (5, 2) -- (5.5, 0) ;

\draw (-1,5) node [above] {$o$} ;
\draw (10.5,5) node [above] {$go$} ;
\end{tikzpicture}
\end{figure}

 We need also to show that $G\bar \ce _n$ satisfies the finite interval condition.    Let $g \in G$ and let $K = gHg^{-1}$.  Consider the union
 $\bar \ce \cup  g\bar \ce $.     This will be a nested set.    In fact it will be the edge set of a tree that is the union of the trees $T(H)$ and $T(K)$.
 In the diagram the red edges are the edges that are just in $T(H)$.   The blue edges are the ones that are in $T(K)$.  The brown edges are in
 both $T(H)$ and $T(K)$.   An edge is in the geodesic joining $o$ and $go$ if and only if it has stabiliser containing $H\cap K$, it will also
 lie in both $T(H)$ and $T(K)$ (i.e.  it is coloured brown) if and only if it its stabiliser contains $H\cap K$ as a subgroup of finite index.
 It may be the case that $T(H)$ and $T(K)$ have no edges in common, i.e. there are no brown edges.    
 An edge lies in both trees if and only if it has a stabiliser that is $(H\cap K)$-finite.   It there are such edges then they will be the edge set of
 a subtree of both trees. They will correspond to the edge set  $\bar \ce (H\cap K)$.

It  follows that $T(H)$ is always a subtree of a  tree constructed from a subset of $G\bar \ce _n$ that contains $\bar \ce _n$.
If $T(H)$ and $T(K)$ do have an edge in common, then $T(H)\cup T(K)$ will be a subtree of the tree we are constructing.   
If $e \in EX$ has vertices $go$ and $ko$ and there is some $C \in G\bar \ce _n$ that has $e \in \delta C$, then $C \in gET(g^{-1}Hg)\cap kET(k^{-1}Hk)$.   If there is no such $C$,  i.e. there is no cut $C \in G\bar \ce _n$ that separates $o$ and $k^{-1}go$ then $T(H) = k^{-1}gT(H)$.
As there is a finite path connecting any two vertices $u,v$  in $X$, it can be seen that there are only finitely many edges in $G\bar \ce _n$ separating
$u$ and $v$ since any such edge must separate the vertices of one of the edges in the path.   Thus $G\ce _n$ is the edge set of a tree.

%    \end {proof}

  We say that a $G$-tree $T$ is reduced if for every $e \in ET$, with vertices $\iota e$ and $\tau e$  we have that either  $\iota e$  and $\tau e$ are in the same orbit, or $G_e$ is a proper subgroup of both $G _{\iota e} $ and $G _{\tau e} $.
  
   \begin {theo}\label {tree}
   Let $G$ be a group that is finitely generated over a subgroup $H$.
   The following are equivalent:-
    \begin {itemize}
    \item [(i)]
  There is a proper $H$-almost invariant set $A = HAK$ with left stabiliser $H$ and right stabiliser $K$, such that $A$ and $gA$ are nested for every $g \in G$.
  \item [(ii)]  There is a reduced $G$-tree $T$ with vertex $v$ and incident edge $e$ such that $G_v = K$ and $G_e = H$.   
  \end {itemize}
  \end {theo}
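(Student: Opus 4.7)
My plan for (ii) $\Rightarrow$ (i) is to mimic the construction recalled in the introduction: set $A = G[e, v]$ where $v = \iota e$. The computation already carried out there gives $A = HAK$ and shows $A$ is $H$-almost invariant, while properness follows from the non-triviality of the $G$-action on $T$ (ensured by reducedness together with $H \ne K$). Nestedness of $A$ with each translate $gA$ comes for free: $gA$ corresponds to the directed edge $ge$ of $T$, and any two ``points-at'' sets in a tree are nested by an easy case analysis on how the corresponding edges sit in $T$.

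For (i) $\Rightarrow$ (ii), I would realise $A$ inside the $G$-graph $X$ from the preceding lemma (so $VX = Go$ with $G_o = H$) and form the $G$-invariant family of cuts $E := \{gA, gA^* : g \in G\}$. By hypothesis $E$ is nested and closed under complement. The plan is to invoke \cite{[D1]}: such an $E$ is the directed edge set of a $G$-tree once it satisfies the finite interval condition. My approach to that condition begins by showing that for each edge $\varepsilon$ of $X$, only finitely many $gA \in E$ satisfy $\varepsilon \in \delta(gA)$: since $\delta A$ is $H$-finite we have $\delta A \subseteq HF$ for a finite $F \subseteq EX$, and $\varepsilon \in \delta(gA)$ forces $g^{-1}\varepsilon \in HF$, hence $g \in \varepsilon F^{-1}H$; but $HA = A$ means $gA$ depends only on the left coset $gH$, leaving finitely many cuts. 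From here, if $B \subsetneq C \subsetneq B'$ in the nested family, then $\delta C$ is forced to share an edge with the finite set $\delta B \cup \delta B'$, which upgrades the per-edge finiteness to the finite interval condition.

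The directed edge of the resulting tree $T$ labelled by $A$ has stabiliser $\{g : gA = A\} = H$, and its endpoint $v = \tau A$ is the vertex whose canonical orientation contains $A$. Since $Ak = A$ for $k \in K$, the action of $K$ on $E$ preserves this orientation, giving $K \subseteq G_v$; the reverse inclusion follows from the fact that $E$ has a single $G$-orbit of unoriented edges, and reducedness is then automatic because the properness of $A$ rules out $G_e = G_v$ (which would force $A$ to be $H$-finite) and the analogous degeneracy at $\iota e$. The main obstacle I foresee is the finite interval condition: passing from ``finitely many cuts contain a given edge of $X$'' to ``finitely many cuts lie strictly between two given cuts'' requires the geometric observation that $\delta C$ must meet $\delta B \cup \delta B'$ for a sandwiched cut $C$, and the stabiliser identification $G_v = K$ also needs a small uniqueness argument, again leveraging the one-orbit structure of $E$.
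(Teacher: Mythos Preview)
Your direction (ii) $\Rightarrow$ (i) agrees with the paper, which simply refers back to the Introduction.

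For (i) $\Rightarrow$ (ii) your overall plan --- show that the nested family $\{gA,gA^*:g\in G\}$ satisfies the finite interval condition and then invoke \cite{[D1]} --- is the paper's plan as well. The paper, however, does not route the argument through the graph $X$. It argues directly in $G$: since $A$ is $H$-almost invariant, $A+Ax$ consists of finitely many cosets $Hg_1,\dots,Hg_k$, and one checks that $g_1^{-1}A,\dots,g_k^{-1}A$ are exactly the members of the family separating $1$ from $x^{-1}$ (equivalently, separating the ``vertices'' $A$ and $Ax$). That immediately yields the finite interval condition, because any $C$ sandwiched between two comparable members of the family must separate a point of the smaller from a point outside the larger.

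Your route via $X$ can be made to work, and the per-edge finiteness step is correct. The gap is the final deduction. The ``geometric observation'' you rely on --- that if $B\subsetneq C\subsetneq B'$ then $\delta C$ must meet $\delta B\cup\delta B'$ --- is \emph{false}. Already on a path $v_1\!-\!v_2\!-\!v_3\!-\!v_4$ with $B=\{v_1\}$, $C=\{v_1,v_2\}$, $B'=\{v_1,v_2,v_3\}$ the three coboundaries are pairwise disjoint; and in the actual situation $\delta B$ and $\delta B'$ are only $H$-finite (for conjugates of $H$), not finite, so even if the inclusion held it would not finish the argument. The correct bridge, which is precisely the paper's argument transported to $X$, is: choose $p\in B$ and $q\in (B')^*$, join them by a finite edge-path in $X$, and note that every sandwiched $C$ has $p\in C$, $q\in C^*$, hence $\delta C$ contains an edge of that fixed path. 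Combined with your per-edge finiteness this gives the finite interval condition.

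On the remaining points (identifying $G_e=H$, $G_v=K$, and reducedness) the paper is in fact no more detailed than you are; it stops once the finite interval condition is checked. Your argument that $K\subseteq G_v$ is fine, but be aware that ``$E$ has a single $G$-orbit of unoriented edges'' is not by itself enough for the reverse inclusion $G_v\subseteq K$; one needs to use that the right stabiliser of $A$ is exactly $K$ together with the description of $v$ as an orientation of the family.
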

  \begin {proof}
  It is shown that (ii) implies (i) in the Introduction.
  
  Suppose than that we have (i).     We will show that there is a $G$-tree - in which $G$ acts on the right - which contains the set 
  $V =  \{ Ax | x \in G\}$ as a subset.        Let  $x \in G$,   then $A + Ax$ is a union of finitely many coset $\{ Hg_1, Hg_2, \dots , Hg _k$.
  Then $\{ g_1 ^{-1}A, g_2^{-1}A, \dots , g_k^{-1}A  \}$ is the edge set of a finite tree  $F$.
  We know that the set $\{ gA | g \in G\}$ is the edge set of a $G$-tree $T$ provided we can show that it satisfies the finite interval condition.
 But  this must be the case as the edges separating vertices $A$ and $Ax$ will be the edges of  $F$.  
    
  \end {proof}
    \begin {theo} Let $G$ be a  group and let $H$ be a subgroup, and suppose $G$ is finitely generated over $H$.   There is a proper $H$-almost invariant subset $A$
  such that $A = AH$, if and only if  there is a non-trivial  reduced $G$-tree $T$ in which $H$ fixes a vertex and every edge orbit contains an edge  with an $H$-finite edge stabilizer.
  \end {theo}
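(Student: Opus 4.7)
The plan is to handle the two implications separately, using the machinery developed above.

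For the backward direction, given the $G$-tree $T$, pick any vertex $v$ fixed by $H$ and any edge $e$ whose $G$-stabilizer $G_e$ is $H$-finite (such an $e$ exists in some orbit by hypothesis). Set $A = G[e,v]$. Then $AG_v = A$, so $AH = A$ because $H \subseteq G_v$, and $G_eA = A$. The Introduction's coset-counting argument shows $A + Ax \subseteq G_e F^{-1}$ for a finite $F \subseteq G$ depending on $x$; since $G_e$ is $H$-finite, it follows that $A + Ax$ is $H$-finite for every $x \in G$. Finally $A$ is proper because the action of $G$ on $T$ is non-trivial, as in the Introduction.

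For the forward direction, start with a proper $H$-almost invariant set $A \subseteq G$ satisfying $AH = A$. By the preceding lemma of Section~3, realize $G$ on a connected $G$-graph $X$ with $G_o = H$ and $G\backslash X$ finite. The condition $AH = A$ means $A$ descends to a subset $\bar A$ of $V(H\backslash X)$, and the $H$-finiteness of $\delta A$ gives $\delta \bar A$ finite. Pick $n$ with $\bar A \in \B_n(H\backslash X)$. By Lemma~\ref{lem2.2} there is a canonical nested generating set $\ce_n$ of $\B_n(H\backslash X)$; lift it to $\bar\ce_n(H,X)$, the edge set of an $H$-tree $\bar T(H)$ in which $H$ fixes the distinguished vertex $\bar o = \nu o$.

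Next, promote $\bar T(H)$ to a $G$-tree. Lemma~\ref{Krop} together with the corner case analysis displayed in the paper shows that $G\bar\ce_n$ is a nested set: for any non-nested pair $E, gE'$ with $E, E' \in \bar\ce_n$, the Kropholler corner $E \cap gE'$ has $(H \cap gHg^{-1})$-finite coboundary, and the compatibility of the canonical decompositions across both trees $\bar T(H)$ and $\bar T(gHg^{-1})$ forces one of the four corners to be empty, so $E$ and $gE'$ nest. The finite-interval condition holds because any two vertices of $X$ are joined by a finite path, and elements of $G\bar\ce_n$ separating them must separate some adjacent pair along it, of which there are only finitely many options. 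Thus $G\bar\ce_n = ET$ for a $G$-tree $T$ with $H$ fixing $\bar o$; the action is non-trivial because $\bar A$ is a non-trivial cut in $\B_n(H\backslash X)$.

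The main obstacle is verifying the $H$-finite edge-stabilizer condition. Every $G$-orbit of edges of $T$ meets $\bar\ce_n(H,X)$, so it suffices to prove that each $E_0 \in \bar\ce_n(H,X)$ has $H$-finite stabilizer $G_{E_0}$. My plan is to exploit that $G_{E_0}$ acts on $\delta E_0$, which lies in only finitely many $H$-orbits, and on $VX$, where $G$ acts transitively with $G_o = H$. Combined with the canonical-decomposition uniqueness from Section~2 (which pins down how $G_{E_0}$ can permute the nearby cuts up to $H$), this forces $G_{E_0}\cdot o$ to lie in finitely many $H$-cosets, i.e.\ $G_{E_0}$ is $H$-finite. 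Finally, pass to the reduced $G$-tree by collapsing edges whose stabilizer equals that of an endpoint in a different orbit; this preserves the $H$-fixed vertex and preserves an $H$-finite-stabilizer representative in each remaining edge orbit.
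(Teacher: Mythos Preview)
Your outline tracks the paper's proof closely: the backward direction via $A=G[e,v]$ is exactly the Introduction argument, and for the forward direction you build $X$, lift $\ce_n(H\backslash X)$ to $\bar\ce_n(H,X)$, invoke Lemma~\ref{Krop} and the corner analysis to see that $G\bar\ce_n$ is nested, check the finite--interval condition along paths in $X$, and then reduce. All of that matches the paper.

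The real gap is your treatment of the $H$-finiteness of edge stabilizers. You note that $G_{E_0}$ acts on $\delta E_0$ and that $\delta E_0$ lies in finitely many $H_{E_0}$-orbits, and you assert that together with ``canonical-decomposition uniqueness'' this forces $G_{E_0}\!\cdot o$ to be finite. But the orbit count alone does not do this: from the action on $\delta E_0$ you only get
\[
G_{E_0}\ \subseteq\ \bigcup_{i=1}^{m} H_{E_0}\,g_i\,G_{f},
\]
for some fixed $f\in\delta E_0$ and finitely many $g_i$, and $G_f$ is contained in a \emph{conjugate} $aHa^{-1}$ of $H$, not in $H$. So the right-hand side is a finite union of double cosets $Hg_i aHa^{-1}$, which need not be $H$-finite. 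Your appeal to canonical uniqueness does not supply the missing constraint, because an element $g\in G_{E_0}$ need not normalize $H$, hence need not preserve $\bar\ce_n(H,X)$; it sends it to $\bar\ce_n(gHg^{-1},X)$ instead.

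That observation is precisely what the paper exploits. For $g\in G_{E_0}$ one has $E_0=gE_0\in g\,\bar T(H)=\bar T(gHg^{-1})$, so $E_0$ lies in $\bar T(H)\cap\bar T(gHg^{-1})$. The paper's analysis of how $\bar T(H)$ and $\bar T(K)$ (with $K=gHg^{-1}$) fit together identifies this intersection with $\bar T(H\cap gHg^{-1})$, and shows that an edge common to both trees has stabilizer that is $(H\cap gHg^{-1})$-finite. Running this over all $g\in G_{E_0}$ yields that $G_{E_0}$ is $H$-finite. You should replace your $\delta E_0$-orbit argument by this intersection-of-trees argument; the rest of your proof then goes through as written.
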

  
   \begin {proof}
   The only if part of the theorem is proved in Theorem \ref {tree}.   In fact it is shown there that if $G$ has an action on a tree with the specified properties, then there is a proper $H$ almost invariant set $A$  for which $HAH= A$.
   
   Suppose then that $G$ has an $H$-almost invariant set $A$ such that  $AH = A$.  Since $G$ is finitely generated over $H$, we can construct
   the $G$-graph $X$ as above, in which $A$ can be regarded as a set of vertices for which $\delta A$ lies in finitely many $H$-orbits.
   Let this number of orbits be $n$.    Then we have seen that there is a $G$-tree $\bar T_n$ for which $H$ fixes a vertex $\bar o$ and
   every edge is in the same $G$-orbit as an edge in $\bar T(H)$.   The edges in this tree are $H$-finite.  The set $A$ has an expression in terms 
   if the edges of $\bar T(H)$.
   Finally we need to show  that the action on $\bar T_n$ is non-trivial.      If $G$ fixes $\bar o$, then $\nu (A)$ consists of the single vertex
   $o$ and so $A$ is not proper.     In fact the fact that $A$ is proper ensures that no vertex of $\bar T_n$ is fixed by $G$.

   It can be seen from the above that $ \bar T(H) \cap \bar T(g^{-1}Hg)  = \bar T(H\cap gHg^{-1})$  so that  if $e \in ET(H)$, and $g \in G_e$, then $e\in \bar T(gHg^{-1})$ and so  $G_e$ is $H$-finite.
  \end {proof}

    The Kropholler Conjecture follows immediately from the last Theorem.

 \section {$H$-almost stability}
Let $G$ be a group with subgroup $H$, and let $T$ be a $G$-tree.

Let $ \bar  A \subset VT$ be  such that $\delta \bar A \subset ET$  consists of finitely many $H$-orbits of edges $e$ such that $G_e$ is $H$-finite.  Also let $H$ fix a vertex of $T$.    Note that $\delta \bar A$ consists of whole $H$-orbits, so that $e \in \delta \bar A$ implies
$he \in \delta \bar A$ for every $h \in H$.     The fact that $G_e$ is $H$-finite for $e \in \delta \bar A$  follows from the fact that $\delta \bar A$ is $H$-finite.      If $H_e$ is the stabiliser of $e \in \delta \bar A$, then $[G_e : H_e]$ is finite.

%If $v = \iota e $, then $G_e = H \leq K = G_v$ and if $A = G[e, \iota e]$, then $A= HAK$.

Let $v \in VT$,  and   let $A = A(v)  = \{ g \in G | gv \in \bar A\} $.      Note that $A(xv) = A(v)x^{-1}$, so that the left action on $T$ becomes a right 
action on the sets $A(v)$.
 if $x \in G$ and $[v , xv]$ is the geodesic from $v$ to $xv$, then $g \in A + Ax$ if and only the geodesic $[gv, gxv]$ contains an odd number of
 edges in $\delta \bar A$.   If $[v, xv]$ consists of the edges $e_1, e_2, \dots , e_r$,  then $ge_i \in \delta \bar A $ if and only if $Hge_i \in \delta \bar A$.   It follows that $H(A + Ax) = A+Ax$.   It is also clear that for each $e_i$ there are only finitely many cosets $Hg$ such that $Hge_i \in \delta \bar A$.
 Thus $A$ is $H$-almost invariant.     We also have $A (v)H = A(v)$ if $H$ fixes $v$.
 
 For each $e \in ET$, let $d(e)$ be the number of cosets $Hg$ such that $Hge \in \delta \bar A$.    We see that $d(e) = d(xe)$ for every $x \in G$
 and so we have a metric on $VT$, that is invariant under the action of $G$.     We will show that if $G$ has an $H$-almost invariant set such
 that $HAH = A$ then there is a $G$-tree with a metric corresponding to this set.

From now on we are interested in the action of $G$ on the set of $H$-almost invariant sets.   But note that we are interested in
the action by right multiplication.   The  Almost Stability Theorem \cite {[DD]}, also used 
the action by right multiplication. 
Let $A \subset G$ be $H$-almost invariant and let $HA = A$     For the moment we do not assume that $AH = A$.

Let $M  = \{ B | B =_a A\}  $ so that  for  $B, C   \in M ,     B +C  = HF$ where $F$ is finite.

Note  that for $H = \{ 1 \}$  it follows from the Almost Stability Theorem that $M$ is the vertex set of a $G$-tree.

%We now complete our proof  of Theorem\ref{KC} for general $H$, by using  the following structure on $X$ as a metric space.
We define a metric on $M$.
For $B, C \in M$ define $d(B, C)$ to be the number of $H$-cosets  in $B +C$.

This is a metric on $M$,   since  $(B+C) + (C+D) = (B+D)$, and so an element  which is in $B+D$ is in just one of 
$B+C$ or $C+D$.  Thus $d(B,D) \leq d(B,C) + d(C, D)$.

Also $G$ acts on $M$ by right multiplication and this action is  by isometries, since $(B+C)z = Bz + Cz$. 
Let $ \Gamma $ be the graph with $V \Gamma = M$ and two vertices are joined by an edge if they are distance one apart.
   Every edge in $ \Gamma $ corresponds to a particular $H$-coset.
There are exactly  $n!$ geodesics joining $B$ and $C$ if $d(B, C) = n$, since a geodesic will correspond to a permutation of
the cosets in $B + C$.  The vertices of $ \Gamma$ on such a geodesic form the vertices of an $n$-cube.

 The edges corresponding to a particular coset $Hb$ disconnect $\Gamma $, since removing this set of edges gives two sets of vertices,   $B$ and $B^*$, where $B$ is the set of those $C \in M$ such that $Hb \subset C$.

It has been pointed out to me by Graham Niblo that $\Gamma $ is the $1$-skeleton of the Sageev cubing introduced in \cite {[Sa]}.
For completeness we describe this alternative characterization of $\Gamma $.

 Let $G$ be a group with subgroup $H$ and let $A = HA$ be an $H$-almost invariant subset.  Let 
 
 $$ \Sigma = \{ gA | g \in G\} \cup \{gA^* | g \in G \}.$$
 
We define a graph $\Gamma '$.
 A vertex $V$ of $\Gamma '$ is a subset of  $\Sigma $ satisfying the following conditions:-
 \begin {itemize}
 \item [(1)] For all $B \in \Gamma '$, exactly one of $B, B^*$ is in $V$.
 \item[(2)] If $B \in V, C \in \Sigma $ and $B \subseteq C$, then $C \in V$.
 \end{itemize}
Two vertices are joined by an edge in $\Gamma '$ if they differ by one element of $\Sigma $.
For $g \in G$, there is a vertex $V_g$ consisting of all the elements of $\Sigma $ that contain $g$.
Then Sageev shows that there is a component $\Gamma ^1$ of $\Gamma '$ that contains all the $V_g$.   In fact this graph
$\Gamma ^1$ is isomorphic to our $\Gamma $.

By (1) for each $V \in \Sigma $ either $A \in V$ or $A^* \in V$ but not both.  Let $\Sigma _A$ be the  subset of $\Sigma $ consisting
of those $V \in \Sigma $ for which $A \subset V$.    The edges joining $\Sigma _A$ and $\Sigma _A^*$ in $\Gamma ^1$ form a hyperplane.
Each edge in the hyperplane joins a pair of vertices that differ only on the set $A$.   For each $xA$ there is a hyperplane joining
vertices that differ only on the set $xA$.  Clearly $G$ acts transitively on the set of hyperplanes.

With $V$ as above,  consider the subset $A_V$ of $G$ 
   $$ A_V =\{ x \in G | x^{-1}A \in V \} . $$
   Then $HA_V = A_V$ and $A_{V_1} =A$.
   Also $A_V +A$ is the union of those cosets $Hx$ for which $V$ and $V_1$ differ on $x^{-1}A$, which is finite.   Thus $A_V \in V\Gamma $.

Thus there is a map $V\Gamma ^1 \rightarrow V\Gamma  $ in which $V \mapsto A_V$.
This map is a $G$-map and an isomorphism of graphs.

If the set $A$ is such that $A$ and $gA$ are nested for every $g \in G$, then  there is a $G$-subgraph of $\Gamma _1$ which is a $G$-tree.
This will also be true of $\Gamma $.

In $\Gamma $ a hyperplane consists of edges joining those vertices that differ only by a particular coset $Hx$.   Every edge of $\Gamma $
belongs to just one hyperplane.   The group $G$ acts transitively on hyperplanes.   The hyperplane corresponding to $Hx$ has stabilizer
$x^{-1}Hx$.

Suppose now that   $A$ is $H$-almost invariant with $HAK = A$.     Here $H$ is the left stabiliser and $K$ is the right stabiliser of $A$, and we assume that  $H \leq K$, so that in particular $HAH = A$.
Note that it follows from the fact that $A$ is $H$-almost invariant that it is also $K$ almost invariant.
Suppose that $G$ is finitely generated over $K$.     We have seen, in the previous section,  that there is a $G$-tree $T$ in which $A$ uniquely determines a set  $\bar A$ of vertices with
$H$-finite coboundary $\delta \bar A$.  Here $T = T_n$ for $n$ sufficiently large that in the graph $X$ -as defined in the previous section -
the set $\delta \bar A $ is contained in at most $n$ $H$-orbits of edges.
Note that if $e$ is an edge of $\bar T (H) = \bar \ce  (H, X)$, then $\delta e$ is $H_e$-finite, and will consist of finitely many $H_e$-orbits.   It is then 
the case that $[G_e: H_e]$ is finite, since $\delta e$ will consist of finitely many $G_e$-orbits each of which is a union of $[G_e:H_e]$  $H_e$-orbits
of edges.

We also know that $K$ fixes a vertex $\bar o$ of $T$,  and that $H\delta \bar A = \delta \bar A$.  
Thus $\delta \bar A$ consists of finitely many $H$-orbits of edges.   
  We can  contract any edge whose $G$-orbit does not intersect $\delta \bar A$.
We will then have a tree that has the properties indicated at the beginning of this section.   Thus $ \bar  A \subset VT$ is   such that $\delta \bar A \subset ET$  consists of finitely many $H$-orbits of edges $e$ such that  $G_e$ is $H$-finite.
We see that the metric $d$ on $M$   is the same as the metric defined on $VT$.  Explicitly we have proved the following theorem in the case when
$G$ is finitely generated over $K$.
\begin {theo}
Let $G$ be a group with subgroup $H$ and let $A = HAK$ where $H \leq K$ and $A$ is $H$-almost invariant.  
Let $M$ be the $G$-metric space defined above.    Then there is a $G$-tree $T$ such that  $VT$ is a $G$-subset of $M$  and the metric on
$M$ restricts  to a geodesic metric on  $VT$.  If $e \in ET$ then some edge in the $G$-orbit of $e$ has $H$-finite stabiliser.
\end {theo}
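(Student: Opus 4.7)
The plan is to apply the relative structure tree machinery of Section~3 to produce a candidate $G$-tree $T$ and then embed $VT$ as a $G$-subset of $M$ via the map $v \mapsto A(v) := \{g \in G \mid gv \in \bar A\}$ from the opening of this section. Since $G$ is finitely generated over $K$ and $H \leq K$, I would first build the connected $G$-graph $X$ of Section~3 having a vertex $o$ with stabiliser $K$, and interpret $A$ as a subset of $VX$. Because $A$ is $H$-almost invariant with $AK = A$, its coboundary $\delta A$ in $X$ is $K$-finite, hence $H$-finite; let $n$ be the number of $H$-orbits of edges in $\delta A$. Applying the construction of $\bar T_n$ from the previous section then yields a $G$-tree in which $K$ fixes a vertex $\bar o$ and in which $A$ has a canonical expression in terms of $\bar \ce_n(H,X)$. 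The set $A$ pulls back to a set $\bar A \subset V\bar T_n$ whose coboundary consists of finitely many $H$-orbits of edges, each in the $G$-orbit of some edge of $\bar T(H)$ and therefore having $H$-finite stabiliser by the computation already given ($[G_e : H_e] < \infty$).

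Next, I would collapse every $G$-orbit of edges of $\bar T_n$ that is disjoint from $G \cdot \delta \bar A$ to obtain the final $G$-tree $T$. In $T$, the set $\bar A$ still has coboundary consisting of finitely many $H$-orbits of edges, every edge orbit meets $\delta \bar A$, and edge stabilisers remain $H$-finite. The map $v \mapsto A(v)$ is $G$-equivariant, intertwining the left $G$-action on $T$ with the right action on $M$ via $A(xv) = A(v)x^{-1}$, and it takes values in $M$: as noted at the start of this section, $A(v)$ is $H$-almost invariant and the symmetric difference $A(v) + A(v_0)$ consists of those cosets $Hg$ for which the geodesic $[gv_0, gv]$ meets $\delta \bar A$ in an odd number of edges, which is a finite union of $H$-cosets. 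Injectivity is forced by the contraction step: if $u \neq v$ in $T$, the geodesic $[u,v]$ contains an edge $e$ with $G \cdot e \cap \delta \bar A \neq \emptyset$, and any $g$ with $ge \in \delta \bar A$ witnesses $A(u) \neq A(v)$ on the coset $Hg$.

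The crux, and the step I expect to be the main obstacle, is matching the metric $d$ on $M$ with the edge-weighted geodesic metric on $T$ in which each edge $e$ has weight $d(e)$ equal to the number of cosets $Hg$ with $Hge \in \delta \bar A$. For a geodesic $[u,v] = e_1, \ldots, e_r$ in $T$, a coset $Hg$ contributes to $A(u) + A(v)$ exactly when an odd number of translates $ge_i$ lie in $\delta \bar A$; the delicate point is to argue that the sets $\{Hg : Hge_i \in \delta \bar A\}$ for different $i$ are disjoint, so that the XOR is simply the sum and equals $\sum_i d(e_i)$. This disjointness should follow from the no-backtracking of tree geodesics together with the structure-tree characterisation that the $G$-orbits appearing as $e_i$ for $i$ in different positions along the geodesic in $T$ correspond to distinct edges of the quotient on the dual path; an overlap $ge_i = e \in \delta \bar A$ and $ge_j = e' \in \delta \bar A$ with $i \neq j$ would produce a cycle in $T$, contradicting the tree property of the underlying structure tree. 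Once the metric is matched, geodesicity is automatic from additivity of the edge-weighted distance along paths in $T$, and the $H$-finite edge stabiliser property has already been recorded in the first paragraph.
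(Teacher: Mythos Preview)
Your proposal treats only the case in which $G$ is finitely generated over $K$; you import that hypothesis in the first paragraph (``Since $G$ is finitely generated over $K$\ldots'') but the theorem as stated has no such assumption. In the paper that case is already disposed of in the discussion \emph{preceding} the theorem, and the proof block itself is devoted entirely to the reduction from arbitrary $G$. That reduction is the missing ingredient: one takes the single orbit $M' = AG \subset M$, notes that any four points $A, B, C, D \in M'$ lie in $AL$ for some subgroup $L$ finitely generated over $H$, so by the already-established case they sit in an $L$-tree, and hence the four-point $0$-hyperbolicity inequality
\[
(B.C)_A \ \geq\ \min\{(B.D)_A,\ (C.D)_A\}
\]
holds in $M'$. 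One then invokes the embedding theorem for $0$-hyperbolic $\Z$-metric spaces (Chiswell, \emph{Introduction to $\Lambda$-trees}, Chapter~2, Theorem~4.4) to obtain a unique $\Z$-tree containing $M'$, whose branch points give the desired $G$-tree. Without this step your argument does not prove the theorem.

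Two smaller points. First, ``$\delta A$ is $K$-finite, hence $H$-finite'' is backwards: since $H\leq K$, $H$-finite implies $K$-finite, not conversely. The correct reason $\delta A$ lies in finitely many $H$-orbits is simply that $A$ is $H$-almost invariant, so each $A+As$ is $H$-finite. Second, your ``cycle in $T$'' argument for the disjointness of the coset sets $\{Hg : Hge_i \in \delta\bar A\}$ does not work: having $h_1ge_i$ and $h_2ge_j$ both in $\delta\bar A$ for $i\neq j$ just exhibits two distinct edges of the tree $T$, not a cycle. The paper avoids a direct verification of this disjointness; once $0$-hyperbolicity is known from the finitely-generated case, the tree and its geodesic metric come for free from Chiswell's theorem, so the metric matching is built into the construction rather than checked by hand.
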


This is illustrated in Fig 1 and Fig 2.

\begin {proof}
It remains to sow that that the theorem for arbitrary $G$ follows from the case when $G$ is finitely generated over $K$.
Thus if $F$ is a finite subset of $G$,  then there is a finite convex subgraph $C$  of $\Gamma $ containing $AF$.   We can use the graph $X$ of the previous section for the subgroup $L$ of $G$ generated by $H \cup F$ to construct an $L$-tree which has a subtree  $S(F)$ with vertex set contained
in $VC$.  These subtrees have the nice property that if $F_1 \subset F_2$ then $S(F_1)$ is a subtree of $S(F_2)$.    They therefore fit together
nicely to give the required $G$-tree.      We give a more detailed argument for why this is the case.  We follow the approach of \cite {[C]}.

Let $M'$ be the subspace of $M$ consisting of the single $G$-orbit $AG$.    Define an inner product on $M'$  by 
$(B.C)_A = {1\over 2} (d(A, B) +d(A,C) - d(B, C)).$

This turns $M'$ into a $0$-hyperbolic space, i.e. it satisfies the inequality 
$$(B.C)_A \geq min \{ (B.D)_A, (C.D)_A\}$$
for every $B, C, D \in M'$.    This is because we know that if $L \leq G$ is finitely generated over $H$, then there is an $L$-tree
which is a subspace of $M$.   But $A, B, C,D$ are vertices of such a subtree which is $0$-hyperbolic.
It now follows from \cite {[C]},  Chapter 2, Theorem 4.4 that there is a unique $\Z$-tree $VT$ (up to isometry) containing $M'$.    The subset of   $VT$ consisting of vertices of degree larger than  $2$  will be the vertices of a $G$-tree and can be regarded as a $G$-subset of $M$ containing $M'$.
\end {proof}

%\eject

\begin{figure}[htbp]
\centering
\begin{tikzpicture}[scale=1]
 \path (6,2) coordinate (p1);
    \path (2,3) coordinate (p2);
    \path (5,4) coordinate (p3);
   \draw (4, 3) coordinate (q1);
   \draw (3, 5) coordinate (q3);
    \draw (5.5, 5) coordinate (q4);
    \draw (1.5, 2) coordinate (q5);
    \draw (2.5, 1) coordinate (q6);
     \filldraw (p1) circle (3pt) ;
 \filldraw (p2) circle (3pt);
   \filldraw (p3) circle (3pt);  
\filldraw  (q1) circle (3pt);
\filldraw [white]  (q1) circle (2pt);

\path (3.5, 6) coordinate (p4);
 \filldraw (p4) circle (3pt);  
 \path (3, 2) coordinate (p5);
  \filldraw (p5) circle (3pt);  
% \draw (p1) -- (q1);
%  \draw (p2) -- (q1);
%\draw (p3) -- (q1);
 \path (7.5, 2) coordinate (q2);
 \filldraw (q2) circle (3pt);
\filldraw (q3) circle (3pt);
\filldraw (q4) circle (3pt);
\filldraw (q5) circle (3pt);
\filldraw (q6) circle (3pt);

\draw [red] (q3)--(p4) ;
\draw [red] (q4)--(p3) ; 
\draw [red] (p2)--(q5)--(p5);  
 %\draw (p1) --(q2);
 \draw [red] (4,3) --(5.1,3) --(6, 2.7) -- (6,2) ;
\draw [red] (4,3) --(4.9,2.7) --(6, 2.7) ;
\draw [red] (4,3) --(4,2.3) --(4.9, 2) -- (6,2) ;
\draw [red] (4,2.3) --(5.1, 2.3) -- (6,2) ;
\draw [red] (5.1, 2.3) -- (5.1,3) ;
\draw [red] (4.9, 2.7) -- (4.9,2) ;
\draw [red] (4, 3)--(4.5, 3.2) --(5, 4) -- (4.5, 3.8) -- cycle ;
\draw [red] (4, 3)--(3, 3.3) --(2,3) -- (3,2.7) -- cycle ;

 \draw [red] (3.5,6) --(4.6,6) --(5.5, 5.7) -- (5.5,5) ;
\draw [red] (3.5,6) --(4.4,5.7) --(5.5,5.7) ;
\draw [red] (3.5,6) --(3.5,5.3) --(4.4, 5) -- (5.5,5) ;
\draw [red] (3.5,5.3) --(4.6,5.3) -- (5.5,5) ;
\draw [red] (4.6, 5.3) -- (4.6,6) ;
\draw [red] (4.4, 5.7) -- (4.4,5) ;
\draw [red] (6,2)--(6.75, 2.5) --(7.5,2) -- (6.75, 1.5) -- cycle ;
\draw [red] (2.5,1)--(2.5, 1.5) --(3,2) -- (3,1.5) -- cycle ;

    \filldraw [white] (p1) circle (2pt);
\filldraw [white]  (p2) circle (2pt);
\filldraw [white] (p3) circle (2pt);
\filldraw [white] (p5) circle (2pt);
\filldraw [white] (p4) circle (2pt);
\filldraw [white]  (q1) circle (2pt);
 \filldraw [white] (q2) circle (2pt);
\filldraw  [white] (q3) circle (2pt);
\filldraw [white]  (q4) circle (2pt);
\filldraw [white]  (q5) circle (2pt);
\filldraw [white] (q6) circle (2pt); 

%{white  \filldraw (q2) circle (2pt);

\end{tikzpicture}
%\vskip-4.8cm \caption{One-connected graph and structure tree}\label{fig:1block}
\end{figure}

\begin{figure}[htbp]
\centering
\begin{tikzpicture}[scale=1.2]
 \path (6,2) coordinate (p1);
    \path (2,3) coordinate (p2);
    \path (5,4) coordinate (p3);
   \draw (4, 3) coordinate (q1);
   \draw (3, 5) coordinate (q3);
    \draw (5.5, 5) coordinate (q4);
    \draw (1.5, 2) coordinate (q5);
    \draw (2.5, 1) coordinate (q6);
     \filldraw (p1) circle (3pt) ;
 \filldraw (p2) circle (3pt);
   \filldraw (p3) circle (3pt);  
\filldraw  (q1) circle (3pt);
\filldraw [white]  (q1) circle (2pt);

\path (3.5, 6) coordinate (p4);
 \filldraw (p4) circle (3pt);  
 \path (3, 2) coordinate (p5);
  \filldraw (p5) circle (3pt);  
 \draw (p1) -- (q1);
  \draw (p2) -- (q1);
\draw (p3) -- (q1);
 \path (7.5, 2) coordinate (q2);
 \filldraw (q2) circle (3pt);
\filldraw (q3) circle (3pt);
\filldraw (q4) circle (3pt);
\filldraw (q5) circle (3pt);
\filldraw (q6) circle (3pt);

\draw (q3)--(p4)--(q4)--(p3);
 \draw (p2)--(q5)--(p5)--(q6);  
 \draw (p1) --(q2);
 \draw [red] (4,3) --(5.1,3) --(6, 2.7) -- (6,2) ;
\draw [red] (4,3) --(4.9,2.7) --(6, 2.7) ;
\draw [red] (4,3) --(4,2.3) --(4.9, 2) -- (6,2) ;
\draw [red] (4,2.3) --(5.1, 2.3) -- (6,2) ;
\draw [red] (5.1, 2.3) -- (5.1,3) ;
\draw [red] (4.9, 2.7) -- (4.9,2) ;
\draw [red] (4, 3)--(4.5, 3.2) --(5, 4) -- (4.5, 3.8) -- cycle ;
\draw [red] (4, 3)--(3, 3.3) --(2,3) -- (3,2.7) -- cycle ;

 \draw [red] (3.5,6) --(4.6,6) --(5.5, 5.7) -- (5.5,5) ;
\draw [red] (3.5,6) --(4.4,5.7) --(5.5,5.7) ;
\draw [red] (3.5,6) --(3.5,5.3) --(4.4, 5) -- (5.5,5) ;
\draw [red] (3.5,5.3) --(4.6,5.3) -- (5.5,5) ;
\draw [red] (4.6, 5.3) -- (4.6,6) ;
\draw [red] (4.4, 5.7) -- (4.4,5) ;
\draw [red] (6,2)--(6.75, 2.5) --(7.5,2) -- (6.75, 1.5) -- cycle ;
\draw [red] (2.5,1)--(2.5, 1.5) --(3,2) -- (3,1.5) -- cycle ;

    \filldraw [white] (p1) circle (2pt);
\filldraw [white]  (p2) circle (2pt);
\filldraw [white] (p3) circle (2pt);
\filldraw [white] (p5) circle (2pt);
\filldraw [white] (p4) circle (2pt);
\filldraw [white]  (q1) circle (2pt);
 \filldraw [white] (q2) circle (2pt);
\filldraw  [white] (q3) circle (2pt);
\filldraw [white]  (q4) circle (2pt);
\filldraw [white]  (q5) circle (2pt);
\filldraw [white] (q6) circle (2pt); 

%{white  \filldraw (q2) circle (2pt);
\draw (5, 0) node {Fig 1};
\end{tikzpicture}
% \caption{M}\label{fig:1block}
\end{figure}
\eject
 
\begin{figure}[htbp]
\centering
\begin{tikzpicture}[scale=1.1]
 \path (6,2) coordinate (p1);
    \path (2,3) coordinate (p2);
    \path (5,4) coordinate (p3);
   \draw (4, 3) coordinate (q1);
   \draw (3, 5) coordinate (q3);
    \draw (5.5, 5) coordinate (q4);
    \draw (1.5, 2) coordinate (q5);
    \draw (2.5, 1) coordinate (q6);
     \filldraw (p1) circle (3pt) ;
 \filldraw (p2) circle (3pt);
   \filldraw (p3) circle (3pt);  
\filldraw  (q1) circle (3pt);
\filldraw [white]  (q1) circle (2pt);

\draw (3, 5.5) node {$^1$};
\draw (4.6, 5.6) node {$^3$};
\draw (5.1, 4.5) node {$^1$};
\draw (4.4, 3.6) node {$^2$};
\draw (5, 2.6) node {$^3$};
\draw (3, 3.1) node {$^2$};
\draw (2.25,2.1) node {$^1$};
\draw (6.75, 2.1) node {$^2$};
\draw (1.5, 2.5) node {$^1$};
\draw (2.6, 1.5) node {$^2$};

\path (3.5, 6) coordinate (p4);
 \filldraw (p4) circle (3pt);  
 \path (3, 2) coordinate (p5);
  \filldraw (p5) circle (3pt);  
 \draw (p1) -- (q1);
  \draw (p2) -- (q1);
\draw (p3) -- (q1);
 \path (7.5, 2) coordinate (q2);
 \filldraw (q2) circle (3pt);
\filldraw (q3) circle (3pt);
\filldraw (q4) circle (3pt);
\filldraw (q5) circle (3pt);
\filldraw (q6) circle (3pt);

\draw (q3)--(p4)--(q4)--(p3);
 \draw (p2)--(q5)--(p5)--(q6);  
 \draw (p1) --(q2);

    \filldraw [white] (p1) circle (2pt);
\filldraw [white]  (p2) circle (2pt);
\filldraw [white] (p3) circle (2pt);
\filldraw [white] (p5) circle (2pt);
\filldraw [white] (p4) circle (2pt);
\filldraw [white]  (q1) circle (2pt);
 \filldraw [white] (q2) circle (2pt);
\filldraw  [white] (q3) circle (2pt);
\filldraw [white]  (q4) circle (2pt);
\filldraw [white]  (q5) circle (2pt);
\filldraw [white] (q6) circle (2pt); 
\draw (5, 0) node {Fig 2};

%{white  \filldraw (q2) circle (2pt);

\end{tikzpicture}
% \caption{$G$-Subgraph of $X$ and structure tree}\label{fig:}
\end{figure}

%Suppose now that $o$ and $go$ lie in  adjacent corners.   By relabelling we assume that $o \in A^*\cap gB$ and $go \in A^*\cap gB^*$.


\begin{thebibliography}{99}
\bibitem {[C]} Ian Chiswell. {\it Introduction to $\Lambda  $-trees}.  World Scientific, 2001.

\bibitem{[D]} Warren Dicks,     {\it Group, trees and projective modules},  Springer Lecture Notes {\bf 790} 1980  

\bibitem{[DD]} Warren Dicks and M.J.Dunwoody, {\it Groups acting on graphs}, Cambridge University Press, 1989.
 Errata {\tt http://mat.uab.es/\~{}dicks/}




 \bibitem{[D1]} M.J.Dunwoody, {\it Accessibility and groups of cohomological dimenstion one}, Proc. London Math. Soc. {\bf 38} (1979)
  193-215.
  
  
 \bibitem{[D2]} M.J. Dunwoody,   {\it Structure trees and networks}, arXiv:1311.3929. 

% \bibitem{[DK]} M.J. Dunwoody and B. Kr\" on,{\it Vertex Cuts},  arXiv:0905.0064. 


  
 \bibitem{[DR]}   M.J.Dunwoody and M.Roller, {\it  Splitting groups over polycyclic-by-finite subgroups}, Bull. London Math.Soc. {\bf 23}  29-36 (1989).
   
 \bibitem {[NK]}  A.Kar and G.A.Niblo,  {\it Relative ends  $\ell ^2$-invariants and  property T},  arXiv:1003.2370. 
 
\bibitem{[K90]}  P.H.Kropholler, {\it  An analogue of the torus decomposition theorem for certain Poincar\'  e  groups}, Proc. London Math. Soc. (3) {\bf 60}
 503-529 (1990).
 
 \bibitem{[K91]} P.H.Kropholler, {\it   A group theoretic proof of the torus  theorem},  London Math. Soc. Lecture Note Series  {\bf 181}  
 138-158  (1991).
 \bibitem {[N]} G.A. Niblo, {\it  A geometric proof of StallingsÕ theorem on groups with more than one end,}  Geometriae Dedicata {\bf 105}, 61-76 (2004).
 


 
\bibitem{NS}
G.~Niblo, M.~Sageev,  {\it The {K}ropholler conjecture},
In \emph{Guido's Book of Conjectures}, Monographies de L'Enseignement Math\'ematique, 40. L'Enseignement MathŽmatique, Geneva, 2008.


%\bibitem{thomassen1993}
%C. Thomassen, W. Woess, {\it Vertex-transitive graphs and accessibility}, J.\ Combin.\ Theory Ser.\ B {\bf 58} (1993) 248-268.

\bibitem {[Sa]} M.Sageev,   {\it   Ends of group pairs and non-positively curved cube complexes,}  Proc. London Math. Soc. (3)  {\bf 71}, 585-617 (1995).


%\bibitem{[S]} J.-P. Serre, \emph{Trees}. Translated from the French by John Stillwell. Springer-Verlag, Berlin-New York, 1980.


\bibitem{[St]} J.R. Stallings, {\it Group theory and three-dimensional manifolds.} 
 Yale Mathematical Monographs,{\bf  4.} Yale University Press, New Haven, Conn.-London, 1971.







\end{thebibliography}
\end{document}